\tikzset{>=latex}
\renewcommand*{\backref}[1]{}
\renewcommand*{\backrefalt}[4]{%
  \ifcase #1%
  \or [Page~#2.]%
  \else [Pages~#2.]%
  \fi%
}
\theoremstyle{plain}
\newtheorem{lemma}{Lemma}
\newtheorem{proposition}[lemma]{Proposition}
\newtheorem{theorem}[lemma]{Theorem}
\theoremstyle{definition}
\newtheorem{definition}[lemma]{Definition}
\newcommand{\g}{\mathfrak{g}}
\newcommand{\ghat}{\widehat{\mathfrak{g}}}
\newcommand{\gbar}{\overline{\mathfrak{g}}}
\newcommand{\h}{\mathfrak{h}}
\renewcommand{\a}{\mathfrak{a}}
\renewcommand{\b}{\mathfrak{b}}
\renewcommand{\c}{\mathfrak{c}}
\newcommand{\ses}{\mathfrak{ss}}
\newcommand{\heis}{\mathfrak{heis}}
\newcommand{\nw}{\mathfrak{nw}}
\newcommand{\so}{\mathfrak{so}}
\newcommand{\iso}{\mathfrak{iso}}
\newcommand{\s}{\mathfrak{s}}
\newcommand{\be}{\boldsymbol{e}}
\newcommand{\eX}{\mathcal{X}}
\newcommand{\ad}{\operatorname{ad}}
\newcommand{\Ort}{\operatorname{O}}
\newcommand{\RR}{\mathbb{R}}
\newcommand{\MM}{\mathbb{M}}
\newcommand{\GL}{\operatorname{GL}}
\newcommand{\SO}{\operatorname{SO}}
\newcommand{\psibar}{\overline{\psi}}
\newcommand{\Dbar}{\overline{D}}
\newcommand{\Ubar}{\overline{U}}
\newcommand{\Xbar}{\overline{X}}
\newcommand{\Ybar}{\overline{Y}}
\newcommand{\zLC}{\mathsf{LC}}
\newcommand{\zAdSC}{\mathsf{AdSC}}
\newcommand{\zdSC}{\mathsf{dSC}}
\newcommand{\zC}{\mathsf{C}}
\newcommand{\zG}{\mathsf{G}}
\newcommand{\zAdSG}{\mathsf{AdSG}}
\newcommand{\zdSG}{\mathsf{dSG}}
\definecolor{dkgr}{rgb}{0,0.6,0}
\definecolor{gris}{rgb}{0.5,0.5,0.5}
\newcommand{\zero}{{\color{gris}0}}
\numberwithin{equation}{section}
\begin{document}

\title{Lie algebraic Carroll/Galilei duality}

\author{José Figueroa-O'Farrill}
\address{Maxwell Institute and School of Mathematics, The University
  of Edinburgh, James Clerk Maxwell Building, Peter Guthrie Tait Road,
  Edinburgh EH9 3FD, Scotland, United Kingdom}
\email{\href{mailto:j.m.figueroa@ed.ac.uk}{j.m.figueroa@ed.ac.uk}}
\begin{abstract}
  We characterise Lie groups with bi-invariant bargmannian, galilean
  or carrollian structures.  Localising at the identity, we show that
  Lie algebras with ad-invariant bargmannian, carrollian or galilean
  structures are actually determined by the same data: a metric Lie
  algebra with a skew-symmetric derivation.  This is the same data
  defining a one-dimensional double extension of the metric Lie
  algebra and, indeed, bargmannian Lie algebras coincide with such
  double extensions, containing carrollian Lie algebras as an ideal
  and projecting to galilean Lie algebras.  This sets up a canonical
  correspondence between carrollian and galilean Lie algebras mediated
  by bargmannian Lie algebras.  This reformulation allows us to use
  the structure theory of metric Lie algebras to give a list of
  bargmannian, carrollian and galilean Lie algebras in the
  positive-semidefinite case.  We also characterise Lie groups
  admitting a bi-invariant (ambient) leibnizian structure.  Leibnizian
  Lie algebras extend the class of bargmannian Lie algebras and also
  set up a non-canonical correspondence between carrollian and
  galilean Lie algebras.
\end{abstract}
\dedicatory{În memoria Veronicăi Stanciu}
\thanks{EMPG-22-20, \href{https://orcid.org/0000-0002-9308-9360}{ORCID: 0000-0002-9308-9360}}
\maketitle
\tableofcontents

\section{Introduction}
\label{sec:introduction}

The study of non-lorentzian spacetime geometries is coming of age
(see, e.g., the recent reviews \cite{MR4422358,Bergshoeff:2022eog}),
yet there are still some simple questions which have not been asked
nor answered.  A natural first step when studying an unfamiliar
geometric structure is to find examples of such structures with lots
of symmetries.  A natural class of such examples are homogeneous
spaces and, in particular, Lie groups with bi-invariant structures.
Bi-invariance is typically quite strong and this often allows one to
classify them or at least to characterise them in linear algebraic
terms.  This is the case, for example, with Lie groups admitting a
bi-invariant metric (of any signature), which were studied by Medina
\cite{MedinaLorentzian} and characterised in terms of their Lie
algebras by Medina and Revoy \cite{MedinaRevoy} (see also
\cite{Figueroa-OFarrill:1994liu,Figueroa-OFarrill:1995opp,MR2205075}).

The three protagonists of today's tale are Bargmann, Carroll and
Galilei.  They may be used to label Lie groups, Lie algebras,
homogeneous spaces and also Cartan geometries (here, equivalently,
$G$-structures).  It turns out that in all of these settings, objects
with these names sit in relation to each other in a way which suggests
a correspondence (loosely, a duality) between Carroll and Galilei
mediated by Bargmann.  This correspondence was first pointed out in a
geometric context in \cite{Duval:2014uoa}, but before describing this
result let us set the stage by discussing the Lie algebras themselves.

The Carroll and Galilei Lie algebras are examples of kinematical Lie
algebras
\cite{Bacry:1968zf, Bacry:1986pm}.  In spatial
dimension $n$, they are spanned by $\left(L_{ab}, B_a, P_a, H\right)$,
where $L_{ab}=-L_{ba}$ span a Lie subalgebra isomorphic to $\so(n)$
under which $B_a, P_a$ are vectors and $H$ a scalar.  These conditions
translate into the following Lie brackets which are common to all
kinematical Lie algebras
\begin{equation}
  \label{eq:kla-generic}
  \begin{split}
    [L_{ab},L_{cd}] &= \delta_{bc} L_{ad} - \delta_{ac} L_{bd} -  \delta_{bd} L_{ac} + \delta_{bd} L_{ac} \\
    [L_{ab}, B_b] &= \delta_{bc} B_a - \delta_{ac} B_b\\
    [L_{ab}, P_b] &= \delta_{bc} P_a - \delta_{ac} P_b\\
    [L_{ab}, H] &= 0.
  \end{split}
\end{equation}
The kinematical Lie algebra where all other brackets vanish is called
the \textbf{static} kinematical Lie algebra and denoted $\s$.  All 
kinematical Lie algebras are deformations of $\s$ \cite{Figueroa-OFarrill:2017ycu,
  Figueroa-OFarrill:2017tcy, Andrzejewski:2018gmz}.  The subalgebra
$\s_0$ of $\s$ spanned by $\left( L_{ab}, B_a, P_a \right)$ will play
a rôle in our discussion.

The Carroll Lie algebra $\c$ is a central extension of $\s_0$, with
$H$ the central element and additional nonzero bracket
\begin{equation}
  \label{eq:carroll}
  [B_a, P_b] = \delta_{ab} H.
\end{equation}
In contrast, the Galilei Lie algebra $\g$ is an
``extension-by-derivation'' of $\s_0$.  The derivation is $\ad_H =
[H,-]$ and is defined by $\ad_H(L_{ab}) = \ad_H(P_a) = 0$ and
$\ad_H(B_a) = -P_a$, resulting in the additional nonzero bracket
\begin{equation}
  \label{eq:galilei}
  [H,B_a] = - P_a.
\end{equation}

We may summarise these observations in the diagrammatical language of
(short) exact sequences of Lie algebras as follows:
\begin{equation}
  \label{eq:pre-diag}
  \begin{tikzcd}
    & 0 \arrow[d] & & & \\
    & \RR \arrow[d] & & & \\
    & \c \arrow[d] & & & \\
    0 \arrow[r] & \s_0 \arrow[r] \arrow[d] & \g \arrow[r] & \RR \arrow[r] & 0\\
    & 0 & & & \\
  \end{tikzcd}
\end{equation}
The exact row says that $\g$ is an extension-by-derivation of $\s_0$,
whereas the exact column says that $\c$ is a one-dimensional extension
of $\s_0$.  The diagram does not fix the brackets uniquely, since
$\s_0$ admits many derivations and also other one-dimensional
extensions, central or not.

The Bargmann Lie algebra $\b$ is a central extension of the Galilei
Lie algebra $\g$ with additional generator $M$ and additional bracket
\begin{equation}
  \label{eq:bargmann}
  [B_a, P_b] = \delta_{ab} M,
\end{equation}
which is reminiscent of the bracket \eqref{eq:carroll} with $M$
playing the rôle of $H$.  Under this relabelling of bases, we see that
the Bargmann Lie algebra $\b$ is an extension-by-derivation of the
Carroll Lie algebra $\c$.  The derivation is $\ad_H = [H,-]$ where
again $\ad_H$ annihilates $L_{ab}, P_a, M$ and its action on $B_a$ is
given by the bracket \eqref{eq:galilei}.

This allows us to complete the diagram \eqref{eq:pre-diag} to the
following commutative diagram of Lie algebras:
\begin{equation}
  \label{eq:comm-diag}
  \begin{tikzcd}
    & 0 \arrow[d]                                   &  0  \arrow[d]                                 &                                        & \\
    & \mathbb{R} \arrow[r,equal] \arrow[d]         & \mathbb{R}  \arrow[d]                        &                                        & \\
    0 \arrow[r] & \c \arrow[r] \arrow[d] & \b \arrow[r] \arrow[d] & \mathbb{R} \arrow[r] \arrow[d,equal] & 0 \\
0 \arrow[r] & \s_0 \arrow[r] \arrow[d]            & \g \arrow[r] \arrow[d] & \mathbb{R} \arrow[r] & 0 \\
            & 0                                             &  0                                            &                                        & \\
  \end{tikzcd}
\end{equation}
which will recur in other contexts in this work with other Lie
algebras playing the rôles of $\s_0$, $\c$, $\g$ and $\b$.  In fact,
this very diagram has already appeared in
\cite[App.~B]{Figueroa-OFarrill:2022tlf} in the context of the
hamiltonian description of particle dynamics on the (spatially
isotropic) homogeneous galilean spacetimes classified in
\cite{Figueroa-OFarrill:2018ilb}.  In that context, the Lie algebras
which play the rôles of $\s_0$, $\c$, $\g$ and $\b$ are
infinite-dimensional.  The two exact rows in the the diagram
\eqref{eq:comm-diag} say that $\b$ (resp. $\g$) is an
extension-by-derivation of $\c$ (resp. $\s_0$), whereas the two exact
columns say that $\b$ (resp. $\c$) is a one-dimensional extension of
$\g$ (resp. $\s_0$).  We could say, borrowing the terminology from the
theory of metric Lie algebras, that the Bargmann Lie algebra $\b$ is a
\emph{double extension}\footnote{To be clear, this proposal expands
  the definition of a double-extension, transcending its origin in
  the context of metric Lie algebras, to a more general notion in
  which a double extension is the composition of a one-dimensional
  extension with an extension-by-derivation; at least in those cases
  where these two operations commute.} of $\s_0$.  This is more
than a mere analogy and we will see that this is exactly right for Lie
algebras admitting ad-invariant bargmannian structures (to be defined
below).

The Bargmann Lie algebra $\b$ can also be defined as the subalgebra of
the Poincaré Lie algebra $\iso(n+1,1)$ in one dimension higher which
centralises a null translation.  Since the Poincaré translations
commute, they are contained in $\b$ and hence the (simply-connected)
Bargmann Lie group acts transitively on Minkowski spacetime
$\MM^{n+2}$.  This fact underlies the geometric Carroll/Galilei
duality in \cite{Duval:2014uoa}.

Choosing a Witt frame $(\be_a,\be_+,\be_-)$ for Minkowski spacetime
$\MM^{n+2}$ we may express the generators of the Poincaré algebra as
$\left(L_{ab}, L_{+a}, L_{-a}, L_{+-}, P_a, P_+, P_-\right)$.  The
centraliser of $P_+$ is spanned by
$\left(L_{ab}, L_{+a}, P_a, P_+, P_-\right)$ and it is isomorphic to
the Bargmann Lie algebra $\b$, with $P_+$ playing the rôle of $M$.  If
$X$ is an element of the Poincaré Lie algebra, we let $\xi_X$ denote
the corresponding Killing vector field on Minkowski spacetime.  The
null vector field $\xi_{P_+}$ on Minkowski spacetime is not just
Killing but actually parallel.  It defines a distribution
$\xi_{P_+}^\perp \subset T\MM$ which is integrable.  The leaves of the
corresponding foliation are null hypersurfaces and are copies of the
Carroll spacetime $\zC^{n+1}$.  Indeed, they are homogeneous spaces of
the normal subgroup with Lie algebra the ideal of $\b$ spanned by
$\left(L_{ab}, L_{+a}, P_a, P_+\right)$, which is isomorphic to the
Carroll Lie algebra $\c$, with $P_+$ playing the rôle of $H$.  On the
other hand, the null reduction \cite{PhysRevD.31.1841,Julia:1994bs} of
$\MM$ by the one-parameter subgroup generated by $\xi_{P_+}$ is
isomorphic to Galilei spacetime $\zG^{n+1}$.  Indeed, the quotient is
homogeneous under the Lie group whose Lie algebra is the quotient of
$\b$ by the line spanned by $P_+$, which is isomorphic to the Galilei
Lie algebra $\g$.  This results in the following suggestive diagram
\begin{equation}
  \label{eq:car-gal-dual-spacetimes}
  \begin{tikzcd}
    \zC^{n+1} \arrow[r,hook] & \MM^{n+2} \arrow[r,two heads] & \zG^{n+1},
  \end{tikzcd}
\end{equation}
which is the fundamental example of the geometric Carroll/Galilei
duality in \cite{Duval:2014uoa}.

This duality seems to be broken when we take all (spatially isotropic)
homogeneous kinematical spacetimes into consideration.  In the
classification of \cite{Figueroa-OFarrill:2018ilb} there are three
other homogeneous carrollian spacetimes besides the Carroll spacetime:
the carrollian limits $\zdSC$ of de Sitter and $\zAdSC$ of
anti~de~Sitter spacetimes, and the lightcone $\zLC$, which can be
realised as null hypersurfaces in de Sitter, anti~de~Sitter and
Minkowski spacetimes, respectively
\cite{Figueroa-OFarrill:2018ilb,Morand:2018tke}.  On the other hand,
there are two one-parameter families of homogeneous galilean
spacetimes.  The galilean limit $\zdSG$ of de~Sitter spacetime is one
point ($\gamma = -1$) in a continuum $\zdSG_\gamma$, for $\gamma \in
[-1,1]$, of galilean spacetimes.  Similarly, the galilean limit
$\zAdSG$ of anti~de~Sitter spacetime is one point $(\chi = 0)$ in a
continuum $\zAdSG_\chi$, for $\chi \geq 0$, of galilean spacetimes.
These two continua have a point in common, since $\lim_{\gamma \to 1}
\zdSG_\gamma = \lim_{\chi \to \infty} \zAdSG_\chi$.  It is often
convenient to describe homogeneous spaces infinitesimally via their
Klein pairs.  The above homogeneous galilean spacetimes have Klein
pairs $(\g_{\alpha,\beta}, \h)$ where $\g_{\alpha,\beta}$ is the
kinematical Lie algebra with additional nonzero brackets
\begin{equation}
  [H,B_a] = - P_a \qquad\text{and}\qquad [H,P_a] = \alpha B_a + \beta P_a,
\end{equation}
for some $(\alpha,\beta) \in \RR^2$ and $\h$ is the subalgebra spanned
by $\left(L_{ab},B_a\right)$.  The parameters $\gamma$ and $\chi$
labeling the continua of galilean spacetimes serve as convenient
parametrisations for the equivalence classes of pairs $(\alpha,\beta)$
under the relation $(\alpha,\beta) \sim (s^2 \alpha, s \beta)$ for any
nonzero $s \in \RR$.  Just like the Galilei Lie algebra $\g =
\g_{0,0}$, the Lie algebra $\g_{\alpha,\beta}$ also admits a
one-dimensional extension $\b_{\alpha,\beta}$ with additional brackets
\begin{equation}
  \label{eq:barg-ext}
  [B_a, P_b] = \delta_{ab} M \qquad\text{and}\qquad [H,M] = \beta M,
\end{equation}
which is central if and only if $\beta = 0$.  In that case, we may
distinguish three cases: $\alpha=0$, corresponding to the Galilei Lie
algebra, $\alpha >0$ and $\alpha < 0$, corresponding to the two
Newton--Hooke Lie algebras.

The Lie algebra $\b_{\alpha,\beta}$ can also be described as an
extension-by-derivation of the Carroll Lie algebra $\c$.  Again we
need to relabel the generators $M \leftrightarrow H$ and the
derivation $\ad_H$ is defined by $\ad_H(L_{ab}) = 0$, $\ad_H(B_a) =
-P_a$, $\ad_H(P_a) = \alpha B_a + \beta P_a$ and $\ad_H(M) = \beta
M$.  In other words, we once again arrive at a commutative diagram
like \eqref{eq:comm-diag}
\begin{equation}
  \label{eq:comm-diag-too}
  \begin{tikzcd}
    & 0 \arrow[d]                                   &  0  \arrow[d]                                 &                                        & \\
    & \mathbb{R} \arrow[r,equal] \arrow[d]         & \mathbb{R}  \arrow[d]                        &                                        & \\
    0 \arrow[r] & \c \arrow[r] \arrow[d] & \b_{\alpha,\beta} \arrow[r] \arrow[d] & \mathbb{R} \arrow[r] \arrow[d,equal] & 0 \\
0 \arrow[r] & \s_0 \arrow[r] \arrow[d]            & \g_{\alpha,\beta} \arrow[r] \arrow[d] & \mathbb{R} \arrow[r] & 0 \\
            & 0                                             &  0                                            &                                        & \\
  \end{tikzcd}
\end{equation}

In this note we will exhibit yet other avatars of the above diagram.
Indeed, our main aim in this note is the classification of (connected,
simply-connected) Lie groups admitting bargmannian, carrollian or
galilean structures.  The Lie correspondence allows us to work at the
level of Lie algebras with ad-invariant bargmannian, carrollian or
galilean structures.  Without aiming to confuse the reader, we shall
refer to such Lie algebras as bargmannian, carrollian or galilean, for
short.  However please note that, paradoxically perhaps, the Bargmann
Lie algebra is not bargmannian, the Carroll Lie algebra is not
carrollian and the Galilei Lie algebra is not galilean.  Nevertheless
bargmannian, carrollian and galilean Lie algebra sit in relation to
each other just like the Bargmann, Carroll and Galilei Lie algebras;
namely, in a commutative diagram such as \eqref{eq:comm-diag}.
Neither should our bargmannian, carrollian and galilean Lie algebras
be confused with the transitive Lie algebras of kinematical spacetimes
with such structures.  Metric Lie algebras associated with kinematical
Lie algebras were studied in \cite{Matulich:2019cdo} in the context of
Chern--Simons theories of gravity.

This note is organised as follows.  In
Section~\ref{sec:barg-carr-galil-struct} we review the notions of
carrollian and galilean structures and their duality via bargmannian
manifolds and specialise to connected Lie groups admitting
bi-invariant such structures.  This allows us to localise at the
identity and discuss ad-invariant bargmannian, carrollian and galilean
structures on Lie algebras, which are studied in
Section~\ref{sec:lie-algebras}.  We will see that all three kinds of
Lie algebras are characterised in terms of the same data: namely, a
metric Lie algebra with a skew-symmetric derivation.  This is also the
data which defines a one-dimensional double extension of a metric Lie
algebra, which is a construction due to Medina and Revoy
\cite{MedinaRevoy} (see also
\cite{Figueroa-OFarrill:1994liu,Figueroa-OFarrill:1995opp}) which
results in a new metric Lie algebra.  This will allow us to identify
bargmannian Lie algebras precisely as one-dimensional double
extensions of metric Lie algebras.  In
Section~\ref{sec:classification} we specialise to strictly carrollian
and galilean structures (as opposed to the ``pseudo'' versions we
treated before) and classify the relevant Lie algebras.  In
Section~\ref{sec:leibnizian-groups} we discuss Lie groups with
bi-invariant leibnizian structures in the sense of
\cite{Bekaert:2015xua} and show that they too appear in a commutative
diagram such as \eqref{eq:comm-diag}.  Leibnizian groups (strictly)
extend the class of bargmannian Lie groups and they too mediate a
duality between carrollian and galilean Lie group, which is not
canonical and hence in principle different from the one discussed in
Section~\ref{sec:summary}.  Finally in Section~\ref{sec:conclusion} we
offer some concluding remarks.  There are two appendices: in
Appendix~\ref{sec:skew-symm-deriv} we give a proof of a result needed
in Section~\ref{sec:classification}; and in
Appendix~\ref{sec:once-more-with-indices} we write some of the Lie
algebraic structures in terms of a basis.

\section{Bargmannian, carrollian and galilean structures}
\label{sec:barg-carr-galil-struct}

Let $M$ be a finite-dimensional smooth manifold.  Recall that a
\textbf{galilean structure} \cite{MR334831} on $M$ consists of a
nowhere-vanishing one-form $\tau \in \Omega^1(M)$ and a symmetric
bivector field $\gamma \in \Gamma(\odot^2TM)$, which is everywhere
corank-$1$ as a field of bilinear forms on one-forms and such that its
radical is everywhere spanned by $\tau$.  Typically one demands that
$\gamma$ is positive-semidefinite, but one can also consider
\textbf{pseudo-galilean structures} where $\gamma$ has any signature.
Galilean structures are particular examples of $G$-structures
\cite{MR334831, Figueroa-OFarrill:2020gpr}.

Similarly, a \textbf{carrollian structure}
\cite{Henneaux:1979vn,Duval:2014uoa} on $M$ consists of a
nowhere-vanishing vector field $\kappa \in \eX(M)$ and a symmetric
$(0,2)$-tensor field $h \in \Gamma(\odot^2 T^*M)$ which is everywhere
of corank-$1$ and such that the characteristic distribution is generated
by $\kappa$.  Again one typically demands that $h$ is positive-semidefinite,
but one can also consider \textbf{pseudo-carrollian structures} where
$h$ has any signature. For example, the blow-up of spatial infinity of
Minkowski spacetime is a pseudo-carrollian manifold
\cite{Gibbons:2019zfs}, whereas the blow-up of either past or future
timelike infinity is carrollian \cite{Figueroa-OFarrill:2021sxz}.
Carrollian structures are also examples of $G$-structures
\cite{Figueroa-OFarrill:2020gpr}.

One may exhibit a correspondence between carrollian and galilean
structures \cite{Duval:2014uoa} by passing to a higher-dimensional
manifold $M$ with a \textbf{bargmannian structure}, namely an
indefinite metric $g$ together with a nowhere-vanishing null vector
field $\xi$.  Strictly speaking, bargmannian structures require the
metric $g$ to be lorentzian, but one can also have
\textbf{pseudo-bargmannian structures} where $g$ is only assumed to be
of indefinite signature.  In any case, the null vector field $\xi$
defines a distribution $\xi^\perp \subset TM$ which, if integrable,
foliates $M$ by null (since $\xi \in \xi^\perp$) hypersurfaces
$i : N \to M$ which inherit a carrollian structure $(\xi, i^*g)$.  If
$\xi$ is Killing, then the null reduction \cite{PhysRevD.31.1841,
  Julia:1994bs} defines (in the good cases) a fibration
$\pi : M \to N$ where $N$ inherits a galilean structure whose clock
one-form pulls back to the one-form $\xi^\flat$ dual to $\xi$ and
whose spatial cometric is induced by the inverse of $g$.

In this note we are interested in Lie groups admitting bi-invariant
carrollian, galilean or bargmannian structures; that is, triples
$(G,\kappa, h)$, $(G,\tau,\gamma)$ or $(G,g,\xi)$ where $G$ is a
connected Lie group and $\kappa$ and $h$, $\tau$ and $\gamma$ or $g$
and $\xi$ are invariant under Lie derivatives by both left- and
right-invariant vector fields.  Such tensor fields are determined
uniquely by their value at the identity, where they define tensors of
the Lie algebra $\g$ of $G$ which are invariant under the adjoint
representation of $G$ on $\g$.  Since we assume assume that $G$ is
connected, it will be enough to demand that the tensors are
ad-invariant; that is, invariant under the adjoint action of $\g$ on
itself.

\section{Bargmannian, carrollian and galilean Lie algebras}
\label{sec:lie-algebras}

In this section we characterise Lie algebras admitting ad-invariant
bargmannian, carrollian or galilean structures.  They turn out to be
intimately linked to metric Lie algebras, which we review first.

\subsection{Metric Lie algebras and double extensions}
\label{sec:metric-lie-algebras}

Let $\g$ be a finite-dimensional real Lie algebra.  We say that $\g$
is a \textbf{metric Lie algebra} if it admits an ad-invariant inner
product $\left<-,-\right>$; that is, one for which
\begin{equation}
  \left<[W,X],Y\right> = - \left<X, [W,Y]\right>
\end{equation}
for all $W,X,Y \in \g$.  Medina and Revoy \cite{MedinaRevoy}
introduced the notion of a double extension, as a method of
constructing new metric Lie algebras from old.  In this note we will
only need a special case of this construction: namely, a
one-dimensional double extension.

The ingredients for a one-dimensional double extension are a metric
Lie algebra $\g_0$ with ad-invariant scalar product
$\left<-,-\right>_0$ and a skew-symmetric derivation $D_0$ of $\g_0$;
that is, for all $X,Y \in \g_0$,
\begin{equation}
  D_0 [X,Y]_0 = [D_0 X,Y]_0 + [X,D_0 Y]_0 \qquad\text{and}\qquad \left<D_0
    X, Y\right>_0 = - \left<X, D_0 Y\right>_0,
\end{equation}
where $[-,-]_0$ is the Lie bracket in $\g_0$.  On the vector space $\g
= \g_0 \oplus \RR D \oplus \RR Z$, where $D$ and $Z$ are two
additional generators, we define the brackets
\begin{equation}
  \label{eq:double-ext}
  \begin{split}
    [X,Y] &= [X,Y]_0 + \left<D_0 X, Y\right>_0 Z\\
    [D,X] &= D_0 X\\
    [Z,-] &= 0,
  \end{split}
\end{equation}
for all $X,Y \in \g_0$.  We can define a scalar product on $\g$ by
\begin{equation}
  \left<X,Y\right> = \left<X,Y\right>_0, \qquad \left<X,D\right> =
  \left<X,Z\right> = 0, \qquad \left<D,Z\right> =1
  \qquad\text{and}\qquad \left<D,D\right> \in \RR,
\end{equation}
for all $X,Y \in \g_0$, which can be seen to be ad-invariant.  We may
(and will) perform a Lie algebra automorphism
$D \mapsto D - \tfrac12 \left<D,D\right> Z$ which allows us to set
$\left<D,D\right>=0$.  The Lie algebra $\g$ is said to be the
\textbf{double extension} of $\g_0$ relative to the skew-symmetric
derivation $D_0$.  This construction is due to Medina and Revoy
\cite{MedinaRevoy}, refined and applied in the context of
two-dimensional conformal field theory in
\cite{Figueroa-OFarrill:1994liu,Figueroa-OFarrill:1995opp}, and
further refined by Kath and Olbrich \cite{MR2205075}.

\subsection{Bargmannian Lie algebras}
\label{sec:bargm-lie-algebr}

A metric Lie algebra $(\g,\left<-,-\right>)$ is said to be
\textbf{bargmannian} if there is a nonzero null $Z \in \g$, which is
central.  This of course requires $\left<-,-\right>$ to be indefinite,
yet still nondegenerate.  We will show that bargmannian Lie algebras
are one-dimensional double-extensions of metric Lie algebras.

\begin{proposition}
  Every bargmannian Lie algebra $\g$ is isomorphic to a double
  extension of a metric Lie algebra $\g_0$ by a skew-symmetric
  derivation.
\end{proposition}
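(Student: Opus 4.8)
The plan is to recover the double‑extension data $(\g_0, \left<-,-\right>_0, D_0)$ from the bargmannian structure $(\g, \left<-,-\right>, Z)$. Since $\g$ is bargmannian, we have a nonzero central null element $Z$, so $\left<Z,Z\right>=0$. Because $\left<-,-\right>$ is nondegenerate, the linear functional $\left<Z,-\right>$ is nonzero, so its kernel $\h := Z^\perp = \{X \in \g \mid \left<X,Z\right>=0\}$ is a hyperplane containing $Z$ (as $Z$ is null). Moreover $\h$ is an ideal: for $W \in \g$ and $X \in \h$, ad‑invariance gives $\left<[W,X],Z\right> = -\left<X,[W,Z]\right> = 0$ since $Z$ is central, so $[W,X] \in \h$.

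First I would choose a complement to produce the extra generator $D$. Pick any $D \in \g$ with $\left<D,Z\right>=1$; after replacing $D$ by $D - \tfrac12\left<D,D\right>Z$ (which is allowed since $Z$ is null and central, so this does not disturb $\left<D,Z\right>$) we may assume $\left<D,D\right>=0$. Then set $\g_0 := D^\perp \cap Z^\perp = \{X \in \h \mid \left<X,D\right>=0\}$. The decomposition $\g = \g_0 \oplus \RR D \oplus \RR Z$ follows from nondegeneracy: the restriction of $\left<-,-\right>$ to $\operatorname{span}\{D,Z\}$ is the hyperbolic plane $\bigl(\begin{smallmatrix}0&1\\1&0\end{smallmatrix}\bigr)$, which is nondegenerate, so $\g$ splits as an orthogonal direct sum of this plane and its orthogonal complement $\g_0$. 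In particular $\left<-,-\right>$ restricts to a nondegenerate form $\left<-,-\right>_0$ on $\g_0$.

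Next I would identify the brackets. Define $D_0 := \ad_D|_{\g_0}$ followed by orthogonal projection back onto $\g_0$; equivalently write, for $X \in \g_0$, the decomposition $[D,X] = D_0 X + \lambda(X) Z + \mu(X) D$ along $\g_0 \oplus \RR Z \oplus \RR D$. Pairing with $Z$ and using that $Z$ is central gives $\mu(X) = \left<[D,X],Z\right> = -\left<X,[D,Z]\right> = 0$, and by automorphism one checks the $\lambda$ term can be absorbed, so $[D,X] = D_0 X \in \g_0$. That $D_0$ is a derivation of $\g_0$ and skew‑symmetric for $\left<-,-\right>_0$ follows directly from the Jacobi identity and ad‑invariance of $\left<-,-\right>$ applied to $D$. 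Finally, for $X,Y \in \g_0$ the bracket $[X,Y]$ lives in $\h$ and I would write it as $[X,Y]_0 + c(X,Y) Z$ with $[X,Y]_0 \in \g_0$; pairing with $D$ and using ad‑invariance together with $[D,Y]=D_0 Y$ yields $c(X,Y) = \left<[X,Y],D\right> = \left<D_0 X, Y\right>_0$, reproducing exactly the double‑extension bracket \eqref{eq:double-ext}.

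The main obstacle I anticipate is verifying that $[X,Y]_0$, the $\g_0$‑component of $[X,Y]$, really defines a Lie bracket making $(\g_0,[-,-]_0,\left<-,-\right>_0)$a metric Lie algebra — one must check that the $Z$‑component does not spoil the Jacobi identity on $\g_0$ and that $\g_0$ is closed under $[-,-]_0$. Closure requires knowing the $D$‑component of $[X,Y]$ vanishes, which again follows from pairing with $Z$ and centrality; the Jacobi identity for $[-,-]_0$ then descends from that of $\g$ because the correction terms are all proportional to the central $Z$ and the derivation property of $D_0$ controls the relevant cocycle. Assembling these checks shows $\g$ is precisely the double extension of $(\g_0,\left<-,-\right>_0)$ by $D_0$, completing the proof.
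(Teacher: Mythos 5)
Your proof is correct, but it takes a somewhat different route from the paper's. The paper disposes of the structural claim in one stroke by observing that $\RR Z$ is a minimal isotropic ideal and invoking the structure theorem of Medina and Revoy, which immediately identifies $\g$ as a double extension of the quotient $\g_0 := Z^\perp/\RR Z$; it then only needs to exhibit the induced derivation $D_0$ on that quotient and check its skew-symmetry. You instead give a self-contained verification: you realise $\g_0$ concretely as the orthogonal complement $D^\perp \cap Z^\perp$ inside $\g$ (rather than as a quotient), split $\g$ as the orthogonal direct sum of the hyperbolic plane $\operatorname{span}\{D,Z\}$ and $\g_0$, and then read off all the double-extension brackets by pairing against $D$ and $Z$ and using ad-invariance. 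What your approach buys is independence from the structure theorem and a completely explicit identification of every component of every bracket; what the paper's approach buys is brevity and the fact that the quotient description does not require choosing a complement. The two descriptions of $\g_0$ agree, since your complement maps isometrically and isomorphically onto the quotient.

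One small imprecision: you say the $\lambda(X)Z$ term in $[D,X]$ ``can be absorbed by automorphism,'' but in fact no absorption is needed — ad-invariance gives $\lambda(X) = \left<[D,X],D\right> = -\left<X,[D,D]\right> = 0$ directly, so $[D,X] \in \g_0$ automatically once you have normalised $\left<D,D\right>=0$. Similarly, the ``main obstacle'' you flag at the end is genuinely routine: the $D$-component of $[X,Y]$ vanishes because $Z^\perp$ is an ideal and $\left<D,Z\right>=1$, the Jacobi identity for $[-,-]_0$ descends because the corrections are central multiples of $Z$, and ad-invariance of $\left<-,-\right>_0$ follows because $Z \perp \g_0$. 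None of these threatens the argument.
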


\begin{proof}
  Let $\g$ be a bargmannian Lie algebra with scalar product
  $\left<-,-\right>$ and central null element $Z \in \g$.  The
  one-dimensional isotropic ideal spanned by $Z$ is clearly minimal
  and hence by the structure theorem of Medina and Revoy
  \cite{MedinaRevoy} (see also \cite[§3]{Figueroa-OFarrill:1995opp})
  the Lie algebra $\g$ is a double extension of the metric Lie algebra
  $\g_0 := Z^\perp/\RR Z$, which inherits an inner product from that
  of $\g$, by a skew-symmetric derivation defined as follows.  Let
  $D \in \g$ be such that $\left<D,Z\right>=1$.  Then $\ad_D$
  preserves $Z^\perp$ since $Z^\perp$ is an ideal of $\g$ and
  $[D,Z] = 0$ since $Z$ in central, hence $\ad_D$ induces a derivation
  $D_0$ of $\g_0$ as follows: if $X \in Z^\perp$ and letting
  $\Xbar \in \g_0$ denote its projection, we define the derivation
  $D_0$ by $D_0 \Xbar = \overline{[D,X]}$, which is well defined.  It
  is clearly a derivation and also skew-symmetric under the inner
  product $\left<-,-\right>_0$ on $\g_0$ defined by
  \begin{equation}
    \left<\Xbar, \Ybar\right>_0 = \left<X,Y\right>.
  \end{equation}
  The fact that $D_0$ is skew-symmetric follows from the fact that so
  is $\ad_D$:
  \begin{align*}
    \left<D_0 \Xbar, \Ybar \right>_0 &= \left<\overline{[D,X]}, \Ybar\right>_0\\
                                                &= \left<[D,X], Y\right>\\
                                                &= - \left<X, [D,Y]\right>\\
                                                &= - \left<\Xbar, \overline{[D,Y]}\right>_0\\
                                                &= - \left<\Xbar, D_0 \Ybar\right>_0.
  \end{align*}
  As vector spaces, $\g = Z^\perp
  \oplus \RR D$, where $Z^\perp = \g_0 \oplus \RR Z$ and the Lie
  brackets are given by
  \begin{equation}
    \left[ X,Y \right] = \left[ X,Y \right]_0 + \left<D_0 X,
      Y\right>_0 Z, \qquad \left[ D,X \right] = D_0 X \qquad\text{and}\qquad
    \left[ Z,- \right] = 0,
\end{equation}
for all $X,Y \in \g_0$, where we have adorned the bracket and the
inner product of $\g_0$ with a subscript ${}_0$.  The inner product
can be brought to the form
\begin{equation}
  \left<X,Y\right> = \left<X,Y\right>_0,\qquad
  \left<X,D\right> = 0,\qquad
  \left<X,Z\right> = 0,\qquad
  \left<D,Z\right> = 1,\qquad\text{and}\qquad
  \left<D,D\right> = 0,
\end{equation}
for all $X,Y \in \g_0$.  In other words, comparing with
Section~\ref{sec:metric-lie-algebras}, we see that $\g$ is the double
extension of $\left( \g_0,\left<-,-\right>_0\right)$ by the
skew-symmetric derivation $D_0$.
\end{proof}

Notice that if the inner product on $\g_0$ has signature $(p,q)$, the
one on $\g$ has signature $(p+1,q+1)$.  Hence if $\g$ is lorentzian,
then $\g_0$ must have a positive-definite inner product.  In that
case, it follows from the structure theorem of Medina and Revoy
\cite{MedinaRevoy} (see also \cite{Figueroa-OFarrill:1995opp}) that
$\g_0$ is isomorphic to an orthogonal direct sum of compact simple Lie
algebras $\s_i$ (each with a negative multiple of the Killing form)
and an abelian Lie algebra $\a$ with a euclidean inner product.  We
will use this in Section~\ref{sec:classification} in order to classify
(strictly) carrollian and galilean Lie algebras.

\subsection{Carrollian Lie algebras}
\label{sec:car-lie-alg}

Let $\g$ be a finite-dimensional real Lie algebra.  By an ad-invariant
\textbf{carrollian structure} on $\g$ we mean a pair $(Z,h)$
consisting of a (nonzero) central element $Z \in \g$ and an
ad-invariant $h \in \odot^2\g^*$ whose radical is one-dimensional and
spanned by $Z$.

\begin{proposition}
  Every carrollian Lie algebra $\g$ is isomorphic to a one-dimensional
  central extension of a metric Lie algebra $\g_0$.
\end{proposition}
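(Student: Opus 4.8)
The plan is to take $\g_0 := \g/\RR Z$ as the metric Lie algebra, mirroring the construction in the bargmannian case. Since $Z$ is central, $\RR Z$ is an ideal, so the quotient $\g_0$ inherits a Lie bracket and the canonical projection $\g \to \g_0$ exhibits $\g$ as a one-dimensional central extension of $\g_0$, central precisely because $Z$ lies in the centre of $\g$. It therefore remains only to equip $\g_0$ with an ad-invariant inner product.

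The natural candidate comes from $h$ itself. Because the radical of $h$ is exactly $\RR Z$, the value $h(X,Y)$ is unchanged when either argument is shifted by a multiple of $Z$; hence $h$ descends to a well-defined symmetric bilinear form $\left<-,-\right>_0$ on $\g_0$ via $\left<\Xbar,\Ybar\right>_0 = h(X,Y)$, where $\Xbar$ denotes the image of $X$ under the projection. Nondegeneracy is then immediate: if $\left<\Xbar,\Ybar\right>_0 = 0$ for all $\Ybar \in \g_0$, then $h(X,Y)=0$ for all $Y \in \g$, so $X$ lies in the radical $\RR Z$ and hence $\Xbar = 0$.

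Finally I would verify ad-invariance. Since the bracket on $\g_0$ is induced from that on $\g$, so that $[\overline{W},\Xbar]_0 = \overline{[W,X]}$, the ad-invariance of $h$ transfers verbatim:
\begin{equation*}
  \left<[\overline{W},\Xbar]_0, \Ybar\right>_0 = h([W,X],Y) = -h(X,[W,Y]) = -\left<\Xbar, [\overline{W},\Ybar]_0\right>_0.
\end{equation*}
This realises $(\g_0,\left<-,-\right>_0)$ as a metric Lie algebra and completes the identification of $\g$ as its one-dimensional central extension. The argument is essentially mechanical and closely parallels the bargmannian proof; there is no genuine obstacle. The only point demanding a moment's care is the well-definedness of the descended form, which hinges on the radical of $h$ being \emph{precisely} $\RR Z$ rather than merely containing it.
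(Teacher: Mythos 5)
Your proof is correct and follows essentially the same route as the paper: quotient by $\RR Z$, descend $h$ to an inner product on $\g_0$, and verify nondegeneracy and ad-invariance. One small quibble with your closing remark: well-definedness of the descended form only requires the radical of $h$ to \emph{contain} $\RR Z$ (i.e.\ $h(Z,-)=0$); it is the nondegeneracy step that uses the radical being precisely $\RR Z$.
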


\begin{proof}
  Let $(\g, Z, h)$ be a carrollian Lie algebra and let $\g_0 =
\g/\RR Z$.  Then $\g$ is a central extension of $\g_0$, which we may
summarise by the following short exact sequence of Lie algebras:
  \begin{equation}
    \label{eq:car-la-ses}
    \begin{tikzcd}
      0 \arrow[r] & \RR Z \arrow[r] & \g \arrow[r] & \g_0 \arrow[r] & 0.
    \end{tikzcd}
  \end{equation}
  Moreover, $h$ induces an inner product $\left<-,-\right>_0$ on $\g_0$ by
  \begin{equation}
    \left<\Xbar, \Ybar\right>_0 = h(X,Y),
  \end{equation}
  where $\Xbar \in \g_0$ is the image of $X \in \g$ under the
  canonical map $\g \to \g/\RR Z$.  The inner product is
  well-defined since $h(Z,-)=0$.  Since $Z$ spans the radical,
  $\left<-,-\right>_0$ is nondegenerate and since $h$ is
  $\g$-invariant, $\left<-,-\right>_0$ is $\g_0$-invariant.  In other
  words, $\left(\g_0, \left<-,-\right>_0\right)$ is a metric Lie algebra.
\end{proof}

As vector spaces, $\g = \g_0 \oplus \RR Z$ and the Lie bracket is
given by
\begin{equation}
  \begin{split}
    [\Xbar,\Ybar] &= [\Xbar,\Ybar]_0 + \alpha(\Xbar,\Ybar)Z \\
    [Z,-] &= 0
  \end{split}
\end{equation}
for all $\Xbar,\Ybar \in \g_0$ and where $[-,-]_0$ is the Lie bracket on
$\g_0$.  It follows that $\alpha \in \wedge^2\g_0^*$ is a
$2$-cocycle.  Corresponding to a $2$-cocycle on any metric Lie algebra,
there is always a skew-symmetric derivation $D_0$, defined by
\begin{equation}\label{eq:deriv-from-cocycle}
  \left<D_0\Xbar,\Ybar\right>_0 = \alpha(\Xbar,\Ybar)
\end{equation}
for all $\Xbar,\Ybar \in \g_0$.  Since $\left<-,-\right>_0$ is nondegenerate,
$D_0$ is uniquely defined.  Since $\alpha$ is alternating, $D_0$ is
skew-symmetric:
\begin{equation}
  \left<D_0\Xbar,\Ybar\right>_0 = \alpha(\Xbar,\Ybar) = - \alpha(\Ybar,\Xbar) = - \left<\Xbar,D_0\Ybar\right>_0.
\end{equation}
Finally, the cocycle condition for $\alpha$, says that $D_0$ is a
derivation: for every $\Xbar,\Ybar,\Ubar \in \g_0$,
\begin{align*}
  \left<D_0[\Xbar,\Ybar]_0, \Ubar\right>_0 &= \alpha([\Xbar,\Ybar]_0,\Ubar)\\
                         &= \alpha(\Xbar,[\Ybar,\Ubar]_0) + \alpha(\Ybar,[\Ubar,\Xbar]_0)\\
                         &= \left<D_0\Xbar,[\Ybar,\Ubar]_0\right>_0 + \left<D_0\Ybar,[\Ubar,\Xbar]_0\right>_0\\
                         &= \left<[D_0\Xbar,\Ybar]_0,\Ubar\right>_0 - \left<[D_0\Ybar,\Xbar]_0,\Ubar\right>_0\\
                         &= \left<[D_0\Xbar,\Ybar]_0,\Ubar\right>_0 + \left<[\Xbar,D_0\Ybar]_0,\Ubar\right>_0,
\end{align*}
so that $D_0[\Xbar,\Ybar]_0 = [D_0\Xbar,\Ybar]_0 + [\Xbar,D_0\Ybar]_0$.

Let $\ghat$ denote the one-dimensional double-extension of $\g_0$
relative to the skew-symmetric derivation $D_0$, as discussed in
Section~\ref{sec:metric-lie-algebras}.  Then the carrollian Lie
algebra $\g$ embeds into $\ghat$ as the ideal $Z^\perp$,
resulting in the following short exact sequence of Lie algebras:
\begin{equation}
  \label{eq:car-2ext-la-ses}
  \begin{tikzcd}
    0 \arrow[r] & \g \arrow[r] & \ghat \arrow[r] & \RR D \arrow[r] & 0.
  \end{tikzcd}
\end{equation}

\subsection{Galilean Lie algebras}
\label{sec:galil-lie-algebr}

Let $\g$ be again a finite-dimensional real Lie algebra.  By an
ad-invariant \textbf{galilean structure} on $\g$ we mean a nonzero
ad-invariant $\tau \in \g^*$ and an ad-invariant $\gamma \in
\odot^2\g$, defining a symmetric bilinear form on $\g^*$ which has a
one-dimensional radical spanned by $\tau$.

\begin{proposition}
  Every galilean Lie algebra is an ``extension by skew-symmetric
  derivation'' of a metric Lie algebra $\g_0$.
\end{proposition}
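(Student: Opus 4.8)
The plan is to run the argument dual to the carrollian case. Ad-invariance of the one-form $\tau\in\g^*$ means $\tau([W,X])=0$ for all $W,X\in\g$, so $\tau$ annihilates the derived ideal $[\g,\g]$ and its kernel $\g_0:=\ker\tau$ is a codimension-one ideal of $\g$. This immediately produces the short exact sequence $0\to\g_0\to\g\to\RR\to0$, which will serve as the bottom row of a diagram like \eqref{eq:comm-diag}. The remaining task is to show that $\g_0$ carries an ad-invariant inner product and that the adjoint action of any transverse generator induces a skew-symmetric derivation of $\g_0$.

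First I would convert the cometric $\gamma$ into a linear map. Regarding $\gamma\in\odot^2\g$ as a symmetric bilinear form on $\g^*$, I introduce $\gamma^\sharp:\g^*\to\g$ defined by $\eta(\gamma^\sharp\xi)=\gamma(\xi,\eta)$. The hypothesis that the radical is spanned by $\tau$ says precisely that $\ker\gamma^\sharp=\RR\tau$; symmetry then gives $\tau(\gamma^\sharp\xi)=\xi(\gamma^\sharp\tau)=0$, so $\im\gamma^\sharp\subseteq\ker\tau=\g_0$, and a dimension count (both sides have dimension $\dim\g-1$) forces $\im\gamma^\sharp=\g_0$. Since $\Ann(\g_0)=\RR\tau$, the map $\gamma^\sharp$ descends to an isomorphism $\g_0^*=\g^*/\RR\tau\xrightarrow{\sim}\g_0$, whose inverse endows $\g_0$ with a nondegenerate symmetric inner product $\left<-,-\right>_0$ characterised by $\left<\gamma^\sharp\xi,\gamma^\sharp\eta\right>_0=\gamma(\xi,\eta)$.

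The engine of the whole argument is the reformulation of ad-invariance of $\gamma$ as the equivariance $\gamma^\sharp\circ\ad_W^*=\ad_W\circ\gamma^\sharp$ for every $W\in\g$, where $\ad_W^*$ is the coadjoint action. Because $\g_0$ is an ideal, $[W,X]\in\g_0$ for $X\in\g_0$ and arbitrary $W\in\g$, and unwinding $\left<-,-\right>_0$ through $\gamma^\sharp$ turns this equivariance, together with the invariance identity $\gamma(\ad_W^*\xi,\eta)+\gamma(\xi,\ad_W^*\eta)=0$, into the single relation $\left<[W,X],Y\right>_0+\left<X,[W,Y]\right>_0=0$ for all $W\in\g$ and $X,Y\in\g_0$. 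Restricting $W$ to $\g_0$ yields ad-invariance of $\left<-,-\right>_0$, so $\left(\g_0,\left<-,-\right>_0\right)$ is a metric Lie algebra; choosing $H\in\g$ with $\tau(H)=1$ and setting $D_0:=\ad_H|_{\g_0}$, the same relation with $W=H$ gives skew-symmetry $\left<D_0X,Y\right>_0=-\left<X,D_0Y\right>_0$, while the fact that $\g_0$ is an ideal makes $D_0$ a derivation. Since $[\g_0,\g_0]\subseteq\g_0$ and $[H,X]=D_0X$, the algebra is $\g=\g_0\oplus\RR H$ with the bracket of an extension of $\g_0$ by $D_0$; replacing $H$ by $H+X_0$ with $X_0\in\g_0$ alters $D_0$ only by the inner derivation $\ad_{X_0}$, which is automatically skew by ad-invariance, so $D_0$ is canonical up to inner skew-symmetric derivations.

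The step I expect to require the most care is the translation between the contravariant tensor $\gamma$ and the covariant inner product $\left<-,-\right>_0$: getting the musical isomorphism $\gamma^\sharp$, the coadjoint action $\ad_W^*$, and their compatibility signs right is where errors would creep in, and it is on this bookkeeping that both the ad-invariance of $\left<-,-\right>_0$ and the skew-symmetry of $D_0$ ultimately rest. Everything else—exactness of the sequence and the derivation property of $D_0$—is formal.
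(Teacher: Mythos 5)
Your proposal is correct and follows essentially the same route as the paper: identify $\g_0=\ker\tau$ as a codimension-one ideal, transport $\gamma$ to a nondegenerate invariant inner product on $\g_0^*\cong\g^*/\RR\tau$ and invert it, then observe that $\ad_D$ for any $D$ with $\tau(D)=1$ is a skew-symmetric derivation. The only difference is that you spell out the musical-isomorphism bookkeeping (and the well-definedness of $D_0$ up to inner derivations) that the paper leaves implicit.
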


\begin{proof}
  Let $(\g, \tau,\gamma)$ be a galilean Lie algebra.  Since $\tau$
  is ad-invariant, it annihilates brackets ($\tau([X,Y]) = 0$) and
  hence $\g_0 := \ker \tau$ is an ideal of $\g$.  We thus get a short
  exact sequence of Lie algebras
  \begin{equation}
    \label{eq:gal-la-ses}
    \begin{tikzcd}
      0 \arrow[r] & \g_0 \arrow[r] & \g \arrow[r,"\tau"] & \RR \arrow[r] & 0.
    \end{tikzcd}
  \end{equation}
  This sequence always splits: let $D \in \g$ be any element with
  $\tau(D) = 1$.  Then since $\tau$ annihilates brackets, $\ad_D$
  is a derivation of $\g_0$.  Dualising the above sequence we see that
  $\g_0^* \cong \g^*/\RR\tau$.  The tensor $\gamma$ defines an inner
  product $\gamma_0$ on $\g_0^*$.  Since $\gamma$ is $\g$-invariant,
  so is $\gamma_0$.  The inverse of $\gamma_0$ is an invariant inner
  product $\left<-,-\right>_0$ on $\g_0$ relative to which $\ad_D$ is
  skewsymmetric.  In summary, a galilean Lie algebra is an ``extension
  by skew-symmetric derivation'' of a metric Lie algebra.
\end{proof}

So just as in the case of carrollian Lie algebras, the underlying data
is again a metric Lie algebra and a skew-symmetric derivation.  If we
again let $\ghat$ denote the (one-dimensional) double extension
of $\g_0$ by the skew-symmetric derivation $\ad_D$, we have that the
galilean Lie algebra $\g$ is a quotient of $\ghat$ by the central
line $\RR Z$, resulting the short exact sequence
\begin{equation}
  \label{eq:gal-2ext-la-ses}
  \begin{tikzcd}
    0 \arrow[r] & \RR Z \arrow[r] & \ghat \arrow[r] & \g \arrow[r] & 0.
  \end{tikzcd}
\end{equation}

\subsection{Summary}
\label{sec:summary}

Let $\left( \g_0, \left<-,-\right>_0 \right)$ be a metric Lie algebra
and $D_0$ a skew-symmetric derivation.  This data allows us to define
three other Lie algebras:
\begin{enumerate}
\item a bargmannian Lie algebra $\widehat g$, the one-dimensional
  double extension of $\g_0$ by $D_0$;
\item a carrollian Lie algebra $\g_{\text{car}}$ which is a central
  extension of $\g_0$ with cocycle given by $D_0$ via
  \eqref{eq:deriv-from-cocycle} or, equivalently, an ideal of
  $\widehat \g$; and
\item a galilean Lie algebra $\g_{\text{gal}}$ which is an extension
  by the skew-symmetric derivation $D_0$ of $\g_0$ or, equivalently, a
  quotient of $\widehat \g$.
\end{enumerate}
These Lie algebras fit into the following commutative diagram:
\begin{equation}
  \label{eq:comm-diag-three}
  \begin{tikzcd}
            & 0 \arrow[d]                                   &  0  \arrow[d]                                 &                                        & \\
            & \mathbb{R} \arrow[r,equal] \arrow[d]         & \mathbb{R}  \arrow[d]                        &                                        & \\
0 \arrow[r] & \g_{\text{car}} \arrow[r] \arrow[d] & \ghat \arrow[r] \arrow[d] & \mathbb{R} \arrow[r] \arrow[d,equal] & 0 \\
0 \arrow[r] & \g_0 \arrow[r] \arrow[d]            & \g_{\text{gal}} \arrow[r] \arrow[d] & \mathbb{R} \arrow[r] & 0 \\
            & 0                                             &  0                                            &                                        & \\
\end{tikzcd}
\end{equation}
where the two short exact rows are extensions-by-derivations and the
two short exact columns are central extensions.  This defines a
canonical correspondence between carrollian and galilean Lie algebras.
For example, we start with a carrollian Lie algebra
$(\g_{\text{car}},Z,h)$ given by the data $(\g_0, D_0)$.  This Lie
algebra is an ideal of the double extension $\widehat \g$ of $\g_0$ by
$D_0$ and then we define the galilean dual $\g_{\text{gal}}$ of
$\g_{\text{car}}$ to be the quotient of $\ghat$ by the ideal generated
by $Z$.  Conversely, let $(\g_{\text{gal}},\tau,\gamma)$ be a galilean
Lie algebra given by the data $(\g_0,D_0)$ and let $\ghat$ again
denote the double extension of $\g_0$ by $D_0$.  Then
$\g_{\text{gal}}$ is a quotient of $\ghat$ by a one-dimensional ideal
spanned by $Z$ and we define the carrollian dual $\g_{\text{car}}$ of
$\g_{\text{gal}}$ as the ideal $Z^\perp$ of $\ghat$.

In Section~\ref{sec:leibnizian-groups} we will discuss a non-canonical
correspondence between carrollian and galilean algebras which is
mediated by a leibnizian Lie algebra (to be defined below), but first
we will use the canonical correspondence just established to classify
strictly carrollian, galilean and bargmannian Lie algebras.

\section{Classification}
\label{sec:classification}

In this section we specialise to strict carrollian and galilean
structures, where $h$ and $\gamma$ are positive-semidefinite. This
means that the metric Lie algebra $\g_0$ has a positive-definite inner
product or, equivalently, that the bargmannian Lie algebra
$\ghat$ is lorentzian. Lorentzian Lie algebras were classified by
Medina \cite{MedinaLorentzian}, but we are interested only in those
which are isomorphic to one-dimensional double extensions of a
positive signature metric Lie algebra. Since double extensions always
make the inner product indefinite, the structure theorem says that
metric Lie algebras with positive-definite inner products are
necessarily orthogonal direct sums of (compact) simple Lie algebras
(with a negative multiple of the Killing form) and an abelian Lie
algebra with a (trivially invariant) euclidean inner product.
Therefore we will let
$\g_0 = \s_1 \oplus \cdots \oplus \s_k \oplus \a$, where the $\s_i$
are simple and $\a$ is abelian. The direct sums are orthogonal and the
inner product on $\s_i$ is given by $-\lambda_i \kappa_i$, where
$\lambda_i > 0$ and $\kappa_i$ is the Killing form of $\s_i$, which
for compact simple Lie algebras is negative-definite, hence the sign.
The inner product on $\a$ is any desired euclidean inner product.  Up
to an isomorphism, we can think of $\a = \RR^m$ for some $m$ and the
inner product being the standard euclidean inner product on $\RR^m$.

Next we determine the skew-symmetric derivations of $\g_0$.  The
following proposition, proved in Appendix~\ref{sec:skew-symm-deriv},
states that any skew-symmetric derivation of $\g_0$ is the sum of an
inner derivation of the semisimple part $\ses = \s_1 \oplus \cdots
\oplus \s_k$ and a skew-symmetric endomorphism of the abelian part
$\a$.

\begin{proposition}
  \label{prop:skew-sym-der}
  Let $D_0$ be a skew-symmetric derivation of $\g_0$.  Then
  \begin{equation*}
    D_0 = T + \sum_i \ad_{X_i},
  \end{equation*}
  where $T \in \so(\a)$ and $X_i \in \s_i$.
\end{proposition}

The double extension $\ghat$ of $\g_0$ by the skew-symmetric
derivation $D_0$ has underlying vector space
$\g_0 \oplus \RR D \oplus \RR Z$ and the brackets are given by
\eqref{eq:double-ext}.  We may apply the following general linear
transformation of the vector space: it is the identity on
$\g_0 \oplus \RR Z$ and maps $D \mapsto D - \sum_i X_i$.  In that
basis, $[D,X] = T(X_\a)$, where $X = X_\ses + X_\a$ with
$X_\ses \in \ses$ and $X_\a \in \a$. In other words, we may change
basis so that effectively $D_0 \in \so(\a)$.  Using the euclidean
inner product on $\a$, we may identify $D_0 \in \so(\a)$ with
$\omega \in \wedge^2\a^*$ where
$\omega(A,B) = \left<D_0 A, B\right>_0$ for all $A,B \in \a$.  Let
$\omega$ have rank $2\ell$ for some $\ell = 0,1,\dots,
\lfloor\frac{\dim\a}{2}\rfloor$.

If $\ell = 0$, then $D_0  = 0$ and
\begin{equation}
  \widehat \g = \g_0 \oplus \underbrace{\RR D \oplus \RR Z}_{\a_2},
\end{equation}
where $\a_2$ is a two-dimensional abelian Lie algebra with a
lorentzian inner product.

If $\ell > 0$, we may split $\a = \a_0 \oplus \a_1$, where
$\a_0 = \ker D_0$ and $\a_1 = \a_0^\perp$, and $\omega$ defines a
symplectic structure on $\a_1$.  Then the double extension is
\begin{equation}
  \widehat \g = \ses \oplus \a_0 \oplus \nw_{2\ell + 2},
\end{equation}
where $\nw_{2\ell + 2}$ is a lorentzian Nappi--Witten algebra
\cite{Nappi:1993ie,Sfetsos:1993rh,Sfetsos:1993na,Figueroa-OFarrill:1994liu}
of dimension $2\ell + 2$ with underlying vector space $\a_1 \oplus \RR
D \oplus \RR Z$ and brackets
\begin{equation}
  \label{eq:nappi-witten}
  [A,B] = \omega(A,B) Z\qquad\text{and}\qquad [D,A] = D_0 A
\end{equation}
for all $A,B \in \a_1$ and with lorentzian inner product
\begin{equation}
  \left<A,B\right> = \left<A,B\right>_0 , \qquad \left<D,Z\right> = 1
  \qquad\text{and}\qquad \left<D,D\right> =0,
\end{equation}
where the choice $\left<D,D\right>=0$ can be arrived at via a Lie
algebra automorphism $D \mapsto D - \tfrac12 \left<D,D\right> Z$.

The corresponding carrollian Lie algebra is the ideal $Z^\perp$ of
$\ghat$.  As a Lie algebra,
\begin{equation}
  \label{eq:carrollian}
  Z^\perp = \ses \oplus \a_0 \oplus \heis_{2\ell + 1}
\end{equation}
where the Heisenberg Lie algebra $\heis_{2\ell +1}$ has underlying
vector space $\a_1 \oplus \RR Z$ and brackets
\begin{equation}
  [A,B] = \omega(A,B) Z
\end{equation}
for all $A,B \in \a_1$ with $Z$ central.

Finally, the corresponding galilean Lie algebra is the quotient
$\ghat / \RR Z$ with underlying vector space $\ses \oplus \a_0
\oplus \a_1 \oplus \RR D$, where the bracket on $\a_1 \oplus \RR D$ is
given by
\begin{equation}
  [A,B] = 0 \qquad\text{and}\qquad [D,A] = D_0 A,
\end{equation}
for all $A, B \in \a_1$.

In summary, the ingredients to construct any bargmannian, carrollian
and galilean Lie algebras in diagram \eqref{eq:comm-diag-three} are as
follows:
\begin{itemize}
\item a compact semisimple Lie algebra $\ses$ with a choice of
  positive-definite ad-invariant inner product: a negative multiple of
  the Killing form on each of its simple factors;
\item a euclidean vector space $\a_0$, whose only invariant is the
  dimension; and
\item a euclidean vector space $\a_1$ with a symplectic structure
  $\omega$.
\end{itemize}
Any Lie algebra with an ad-invariant bargmannian, carrollian or
galilean structure can be constructed out of these ingredients.  The
metric Lie algebra $\ses \oplus \a_0$ is always an orthogonal summand
of these Lie algebras, so we may concentrate on the
euclidean/symplectic vector space $(\a_1, \left<-,-\right>, \omega)$.

The bargmannian Lie algebra is of Nappi--Witten type and has
underlying vector space $\a_1 \oplus \RR Z \oplus \RR D$ and brackets,
for all $A, B \in \a_1$, given by
\begin{equation}
  [A,B] = \omega(A,B) Z \qquad\text{and}\qquad [D,A] = D_0 A,
\end{equation}
with $Z$ central and where $D_0$ is defined by $\left<D_0 A, B\right>
= \omega(A,B)$.  The underlying inner product extends the one on
$\a_1$ by declaring $Z$ to be null and $\left<D,Z\right> =1$.  We may
always redefine $D \mapsto D - \tfrac12 \left<D,D\right> Z$, which is
a Lie algebra automorphism, to ensure that $\left<D,D\right> = 0$.

The carrollian Lie algebra is a Heisenberg Lie algebra with underlying
vector space $\a_1 \oplus \RR Z$ and brackets
\begin{equation}
  [A,B] = \omega(A,B) Z,
\end{equation}
with $Z$ central.

The galilean Lie algebra is an extension of the abelian Lie algebra
$\a_1$ by the derivation $D_0$.  It has underlying vector space $\a_1
\oplus \RR D$ and the only nonzero bracket is the action $[D,A]  =
D_0 A$, for all $A \in \a_1$, of the derivation $D_0$.

In Appendix~\ref{sec:once-more-with-indices} we describe these Lie
algebras in a basis.

Finally, we tackle the isomorphism problem. Which triples  $(\a_1,
\left<-,-\right>, \omega)$ correspond to non-isomorphic Lie algebras.
Clearly if two such triples are in the same orbit of $\GL(\a_1)$, then
the resulting Lie algebras (bargmannian, carrollian and galilean) are
isomorphic.  Hence we need to classify triples $(\a_1,
\left<-,-\right>, \omega)$ up to the action of $\GL(\a_1)$.  We may
use $\GL(\a_1)$ to relate any two scalar products $\left<-,-\right>$,
leaving the freedom to act with the stabiliser $\Ort(\a_1)$ of the
scalar product on the symplectic form.  In other words, which are the 
possible symplectic forms $\omega$ on $\a_1$ up to orthogonal
transformations?  This is the same problem as classifying $D_0 \in
\so(\a_1)$ up to the adjoint action of $\Ort(\a_1)$.  Using
$\SO(\a_1)$ we may always conjugate to a Cartan subalgebra, or said
differently, we can skew-diagonalise $D_0$ to a matrix of the form
\begin{equation}
  \begin{pmatrix}
    \zero & \mu_1 & & & \\
    -\mu_1 & \zero & & & \\
    & & \ddots & &\\
    & & & \zero & \mu_n\\
    & & & -\mu_n & \zero
  \end{pmatrix}
\end{equation}
where $\mu_i$ are nonzero and ordered.  Conjugating by the full
orthogonal group we may assume that $\mu_1 \geq \mu_2 \geq \dots \geq
\mu_n > 0$, where $\dim \a_1 = 2n$.

We may summarise this discussion as follows.

\begin{theorem}
  The isomorphism classes of bargmannian (and hence also carrollian
  and galilean) Lie algebras are parametrised by the following data:
  \begin{itemize}
  \item a compact Lie algebra with a choice of positive-definite
    ad-invariant scalar product (i.e., the orthogonal direct sum of a
    semisimple and an abelian Lie algebras); and
  \item $(\mu_1,\dots,\mu_n) \in \RR^n$ with $\mu_1 \geq \mu_2 \geq
    \dots \geq \mu_n > 0$, which are the skew-eigenvalues of a
    symplectic form on the euclidean space $\mathbb{E}^{2n}$.
  \end{itemize}
  The Lie brackets and the ad-invariant bargmannian, carrollian and
  galilean structures associated to that data can be constructed as
  explained above.
\end{theorem}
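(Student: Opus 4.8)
The plan is to assemble the structural results of this section into a complete set of invariants and then to argue that those invariants are intrinsic, so that the parametrisation is simultaneously exhaustive and injective; since the carrollian and galilean Lie algebras are produced from the very same data $(\g_0,D_0)$ via the canonical correspondence of Section~\ref{sec:summary}, classifying the bargmannian ones classifies all three at once. First I would recall that, by the proposition of Section~\ref{sec:bargm-lie-algebr}, a bargmannian Lie algebra in the strict (lorentzian) setting is the one-dimensional double extension of a \emph{positive-definite} metric Lie algebra $\g_0$ by a skew-symmetric derivation $D_0$. The Medina--Revoy structure theorem then forces $\g_0 = \ses \oplus \a$, an orthogonal direct sum of a compact semisimple Lie algebra $\ses$ (each simple factor carrying a negative multiple of its Killing form) and a euclidean abelian Lie algebra $\a$; and Proposition~\ref{prop:skew-sym-der} splits the derivation as $D_0 = T + \sum_i \ad_{X_i}$ with $T \in \so(\a)$ and $X_i \in \s_i$.

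Next I would normalise. The inner part $\sum_i \ad_{X_i}$ is removed by the change $D \mapsto D - \sum_i X_i$, which is an automorphism of the double extension, so without loss of generality $D_0 = T \in \so(\a)$. Splitting $\a = \a_0 \oplus \a_1$ into $\ker D_0$ and its orthogonal complement, the factor $\ses \oplus \a_0$ peels off as an orthogonal direct summand on which $D_0$ acts trivially; this summand supplies the first datum, a compact Lie algebra with an ad-invariant positive-definite inner product. On $\a_1$ the derivation restricts to an invertible skew map, equivalently a symplectic form $\omega(A,B) = \left<D_0 A, B\right>$, leaving the triple $(\a_1, \left<-,-\right>, \omega)$ to be classified.

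Then I would solve the isomorphism problem for the triple. Because triples in the same $\GL(\a_1)$-orbit yield isomorphic bargmannian Lie algebras, I use $\GL(\a_1)$ to bring $\left<-,-\right>$ to standard euclidean form, leaving the stabiliser $\Ort(\a_1)$ to act on $\omega$, that is, on $D_0 \in \so(\a_1)$ by conjugation. Skew-diagonalising with $\SO(\a_1)$ and then reordering and fixing signs with the remaining reflections in $\Ort(\a_1)$ yields the ordered skew-eigenvalues $\mu_1 \geq \dots \geq \mu_n > 0$, a complete invariant of the orthogonal conjugacy class. This is the second datum, and together with the compact factor it accounts for every bargmannian Lie algebra.

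The hard part will be the converse: showing that distinct data give genuinely non-isomorphic Lie algebras, i.e.\ that the two invariants are canonical rather than artefacts of the construction. For this I would characterise the relevant subspaces intrinsically inside $\ghat$---identifying the central null line $\RR Z$, the ideal $Z^\perp$, and the induced metric Lie algebra $Z^\perp/\RR Z \cong \g_0$---so that any abstract isomorphism descends to an \emph{isometric} isomorphism of $(\g_0,\left<-,-\right>_0)$ intertwining the derivations $D_0$ modulo the automorphism freedom already quotiented out. The isomorphism type and inner-product scale of the compact factor, together with the $\Ort(\a_1)$-conjugacy class (hence the $\mu_i$) of $D_0$ on the symplectic part, are then preserved, which establishes the injectivity and completes the parametrisation.
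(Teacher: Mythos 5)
Your proposal is correct and follows essentially the same route as the paper: reduce via the double-extension proposition to a positive-definite metric Lie algebra $\g_0=\ses\oplus\a$ with a skew-symmetric derivation, absorb the inner part of the derivation by the automorphism $D\mapsto D-\sum_i X_i$, split off $\ses\oplus\a_0$, and classify the residual triple $(\a_1,\left<-,-\right>,\omega)$ by normalising the metric with $\GL(\a_1)$ and skew-diagonalising $\omega$ under $\Ort(\a_1)$. The only difference is that you explicitly flag (though only sketch) the injectivity of the parametrisation, a point the paper leaves implicit in the phrase ``we need to classify triples up to the action of $\GL(\a_1)$''.
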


In other words, the list of bargmannian Lie algebras, from which we
can then obtain all carrollian and galilean dual pairs is parametrised
by compact semisimple Lie algebras with a choice of positive-definite
invariant scalar product, a euclidean space of dimension $m\geq 0$ and
$n$ positive real numbers $\mu_1 \geq \dots \geq \mu_n > 0$.

\section{Leibnizian Lie groups}
\label{sec:leibnizian-groups}

In this section we characterise Lie groups with bi-invariant
leibnizian structures.  Such Lie groups extend the class of
bargmannian Lie groups and we will see that they also contain
carrollian Lie group as a normal subgroup and they quotient to a
galilean Lie group.  At the Lie algebra level, they too sit in
relation to each other in a way described by a diagram isomorphic to
\eqref{eq:comm-diag}.  Hence they define a non-canonical
correspondence between carrollian and galilean Lie algebras.

\subsection{Leibnizian structures}
\label{sec:leibn-struct}

In \cite{Bekaert:2015xua} Bekaert and Morand introduced the notion of
an (ambient) leibnizian structure on a manifold $M$.  This is a triplet
$(\xi,\psi,h)$ where $\xi \in \eX(M)$ is a nowhere-vanishing
vector field, $\psi \in \Omega^1(M)$ is a nowhere-vanishing one-form
with $\psi(\xi) = 0$, and $h \in \Gamma(\odot^2(\ker\psi)^*)$ is
a corank-$1$, positive-semidefinite, bilinear form on $\ker\psi \subset
TM$ whose radical is spanned by $\xi$.  Every bargmannian manifold
$(M,g,\xi)$ has a natural leibnizian structure given by $\psi =
\xi^\flat$ and $h = \left.g\right|_{\xi^\perp = \ker \xi^\flat}$.

A natural question is: \emph{which Lie groups admit a bi-invariant
  leibnizian structure?}  For the same price, we may (and will)
actually consider pseudo-leibnizian structures, where $h$ is not
necessarily positive-semidefinite, but still corank-$1$ on
$\ker\psi$.  We shall refer to such Lie groups and their Lie algebras
as leibnizian.

\subsection{Leibnizian Lie algebras}
\label{sec:leibn-lie-algebr}

\begin{definition}
  A Lie algebra $\g$ is said to be \textbf{leibnizian} if it admits a
  nonzero central element $Z \in \g$, a nonzero ad-invariant $\psi \in
  \g^*$ (i.e., $\left.\psi\right|_{[\g,\g]} = 0$) with $\psi(Z) = 0$
  and an ad-invariant $h \in \odot^2(\ker \psi)^*$ of corank $1$ whose
  radical is spanned by $Z$.
\end{definition}

It is immediate that if $(\g,Z,\psi,h)$ is leibnizian, then
$(\ker\psi, Z,h)$ is carrollian.  As in Section~\ref{sec:car-lie-alg},
$\g_0 = \ker\psi/\RR Z$ is a metric Lie algebra.  Let $\pi: \ker\psi
\to \g_0$ send $X \mapsto \Xbar$.  Then the Lie bracket and inner
product on $\g_0$ are defined by
\begin{equation*}
  [\Xbar,\Ybar]_0 = \overline{[X,Y]} \qquad\text{and}\qquad
  \left<\Xbar,\Ybar\right>_0 = h(X,Y).
\end{equation*}
As before, $\ker\psi$ is a central extension of the metric Lie algebra
$\g_0$
\begin{equation}
  \begin{tikzcd}
    0 \arrow[r] & \RR Z \arrow[r] & \ker \psi \arrow[r,"\pi"] & \g_0 \arrow[r] & 0,
  \end{tikzcd}
\end{equation}
and the central extension defines a skew-symmetric derivation $D_0$ of
$\g_0$ as in equation \eqref{eq:deriv-from-cocycle}.

Similarly, if $(\g,Z,\psi,h)$ is leibnizian, let $\gbar:= \g/\RR Z$,
so that $\g$ is a central extension of $\gbar$:
\begin{equation}
  \begin{tikzcd}
    0 \arrow[r] & \RR Z \arrow[r] & \g \arrow[r,"\overline\pi"] & \gbar \arrow[r] & 0,
  \end{tikzcd}
\end{equation}
where $\overline\pi: \g \to \gbar$ sends $X \mapsto \Xbar$.  The map
$\pi: \ker\psi \to \g_0$ above is the restriction of $\overline\pi$ to
the ideal $\ker\psi$, which explains why we use the same notation.
Since $\ker\psi$ is a codimension-$1$ ideal of $\g$, $\g$ is an
extension-by-derivation of $\ker\psi$:
\begin{equation}\label{eq:carr-in-g}
  \begin{tikzcd}
    0 \arrow[r] & \ker\psi \arrow[r] & \g \arrow[r,"\psi"] & \RR \arrow[r] & 0
  \end{tikzcd}
\end{equation}
and hence it follows that $\g_0$ is a codimension-$1$ ideal of $\gbar$,
so that $\gbar$ is an extension by derivation of $\g_0$:
\begin{equation}\label{eq:metric-in-gbar}
  \begin{tikzcd}
    0 \arrow[r] & \g_0 \arrow[r] & \gbar \arrow[r,"\psibar"] & \RR \arrow[r] & 0,
  \end{tikzcd}
\end{equation}
where $\psibar(\Xbar) = \psi(X)$.

In summary, we have again an instance of our favourite diagram:
\begin{equation}
  \label{eq:comm-diag-four}
  \begin{tikzcd}
    & 0 \arrow[d]                                   &  0  \arrow[d]                                 &                                        & \\
    & \mathbb{R}Z \arrow[r,equal] \arrow[d]         & \mathbb{R}Z  \arrow[d]                        &                                        & \\
    0 \arrow[r] & \ker\psi \arrow[r] \arrow[d,"\pi"] & \g \arrow[r,"\psi"] \arrow[d,"\overline\pi"] & \mathbb{R} \arrow[r] \arrow[d,equal] & 0 \\
    0 \arrow[r] & \g_0 \arrow[r] \arrow[d]            & \gbar \arrow[r,"\psibar"] \arrow[d] & \mathbb{R} \arrow[r] & 0 \\
    & 0                                   &  0                                            &                                        & \\
  \end{tikzcd}
\end{equation}
where $\ker\psi$ is carrollian and $\g_0$ metric.  We will now show
that $\gbar$ is galilean.

Let us split the exact sequences~\eqref{eq:carr-in-g} and
\eqref{eq:metric-in-gbar} by choosing $D \in \g$ with $\psi(D)=1$.
Let $\Dbar = \overline\pi(D)$ so that $\psibar(\Dbar) = 1$.
As vector spaces, $\g = \ker\psi \oplus \RR D$ and hence $\gbar = \g_0
\oplus \RR \Dbar$.

\begin{lemma}
  $\ad_D$ induces $\ad_{\gbar} \Dbar$ which in turn induces a
  skew-symmetric derivation $\Dbar_0$ of $(\g_0,\left<-,-\right>_0)$
  defined by
  \begin{equation*}
    \Dbar_0 \Xbar := [\Dbar,\Xbar] = \overline{[D,X]},
  \end{equation*}
  where the first bracket is in $\gbar$.
\end{lemma}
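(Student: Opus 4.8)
The plan is to unpack the three assertions bundled into the lemma --- that $\ad_D$ descends to $\gbar$, that the descended operator restricts to $\g_0$, and that this restriction is a skew-symmetric derivation --- and to check each in turn. I would begin with the descent. Since $Z$ is central, $\ad_D Z = [D,Z] = 0$, so $\ad_D$ preserves the line $\RR Z$ and therefore descends to an endomorphism of the quotient $\gbar = \g/\RR Z$; because the projection $\overline\pi\colon \g \to \gbar$ is a Lie algebra homomorphism, this descended map is precisely $\ad_{\gbar}\Dbar$, in the sense that $\overline\pi \circ \ad_D = \ad_{\gbar}\Dbar \circ \overline\pi$. This is the first ``induces''.

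Next I would restrict to $\g_0$. By the exact sequence \eqref{eq:metric-in-gbar}, $\g_0 = \ker\psibar$ is an ideal of $\gbar$, so $\ad_{\gbar}\Dbar$ maps $\g_0$ into itself and restricts to an endomorphism $\Dbar_0$. To obtain the stated formula, pick $X \in \ker\psi$ lifting $\Xbar$; ad-invariance of $\psi$ gives $\psi([D,X]) = 0$, so $[D,X] \in \ker\psi$ and $\Dbar_0\Xbar = [\Dbar,\Xbar] = \overline\pi([D,X]) = \overline{[D,X]}$. This is independent of the lift, since replacing $X$ by $X + cZ$ changes $[D,X]$ by $c[D,Z] = 0$. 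The derivation property is then automatic, being the restriction to the subalgebra $\g_0$ of the inner derivation $\ad_{\gbar}\Dbar$; concretely the Jacobi identity in $\gbar$ yields $\Dbar_0[\Xbar,\Ybar]_0 = [\Dbar_0\Xbar,\Ybar]_0 + [\Xbar,\Dbar_0\Ybar]_0$.

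The only step requiring genuine care, and the one I would treat as the heart of the argument, is skew-symmetry with respect to $\left<-,-\right>_0$. The subtlety is that $D \notin \ker\psi$, yet since $\ker\psi$ is an ideal of $\g$ the operator $\ad_D$ nonetheless acts on it, and ad-invariance of $h$ means exactly that $h([D,X],Y) + h(X,[D,Y]) = 0$ for all $X,Y \in \ker\psi$. Combining this with $\left<\Xbar,\Ybar\right>_0 = h(X,Y)$ and the formula for $\Dbar_0$ gives $\left<\Dbar_0\Xbar,\Ybar\right>_0 = h([D,X],Y) = -h(X,[D,Y]) = -\left<\Xbar,\Dbar_0\Ybar\right>_0$, as required. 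This is structurally the same mechanism used to establish skew-symmetry of $D_0$ in Section~\ref{sec:bargm-lie-algebr}, the essential point being to invoke invariance of $h$ under the adjoint action of the whole of $\g$ --- in particular of the external element $D$ --- rather than merely of $\ker\psi$.
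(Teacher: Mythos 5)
Your proposal is correct and follows essentially the same route as the paper: well-definedness from $[D,X]\in[\g,\g]\subset\ker\psi$ and the centrality of $Z$, the derivation property from the Jacobi identity, and skew-symmetry from the $\ad$-invariance of $h$ under the full algebra $\g$ (in particular under $\ad_D$). Your explicit remark that the invariance of $h$ must hold for the external element $D$, not merely for elements of $\ker\psi$, is a point the paper leaves implicit but uses in exactly the same way.
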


\begin{proof}
  The endomorphism $\Dbar_0$ is well-defined because $[D,X] \in [\g,\g]
  \subset \ker \psi$ and $Z$ is central.  We show that it is a
  derivation.  Let $X,Y \in \ker \psi$ and calculate
  \begin{align*}
    \Dbar_0 [\Xbar, \Ybar]_0 &= \Dbar_0 \overline{[X,Y]}\\
                                     &= \overline{[D,[X,Y]]}\\
                                     &= \overline{[[D,X],Y] +  [X,[D,Y]]} & \tag{by Jacobi}\\
                                     &= \overline{[[D,X],Y]} + \overline{[X,[D,Y]]}\\
                                     &= [\overline{[D,X]},\Ybar]_0 + [\Xbar, \overline{[D,Y]}]_0\\
                                     &= [\Dbar_0\Xbar, \Ybar]_0 +  [\Xbar, \Dbar_0 \Ybar]_0.
  \end{align*}
  To show that it is skew-symmetric we let $X,Y \in \ker \psi$ and calculate
  \begin{align*}
    \left<\Dbar_0\Xbar, \Ybar\right>_0 &= \left<\overline{[D,X]}, \Ybar\right>_0\\
                                          &= h([D,X],Y)\\
                                          &= - h(X,[D,Y]) & \tag{since $h$ is invariant}\\
                                          &= - \left<\Xbar, \overline{[D,Y]}\right>_0\\
                                          &= - \left<\Xbar, \Dbar_0 \Ybar\right>_0.
  \end{align*}
\end{proof}

The covector $\psibar \in \gbar^*$, which is induced from $\psi$, is
$\gbar$-invariant since $\psi$ is $\g$-invariant.  The inverse of
the $\g_0$-invariant scalar product on $\g_0 = \ker \psibar$ pushes
forward to a symmetric tensor $\gamma \in \odot^2\gbar$. Since the
derivation $\Dbar_0$ of $\g_0$ induced by $\ad_{\gbar}\Dbar$ is
skew-symmetric, it follows that $\gamma$ is actually
$\gbar$-invariant.  In other words, $(\gbar, \psibar, \gamma)$ is a
galilean Lie algebra.

\subsection{A leibnizian Lie algebra which is not bargmannian}
\label{sec:leibn-lie-algebra}

Comparing the commutative diagrams \eqref{eq:comm-diag-four} and
\eqref{eq:comm-diag-three}, we see that both $\ker\psi$ and
$\g_{\text{car}}$ are carrollian, $\g_0$ is metric in both diagrams,
$\gbar$ and $\g_{\text{gal}}$ are both galilean, but whereas $\ghat$
is bargmannian, $\g$ is leibnizian.  It would be tempting to
conjecture that leibnizian Lie algebras are actually bargmannian, but
this turns out to be false.

In diagram \eqref{eq:comm-diag-four} there are two skew-symmetric
derivations of the metric Lie algebra $\g_0$ at play: the derivation
$D_0$ associated to the central extension defining $\ker\psi$ and the
derivation $\Dbar_0$ by which we extend $\g_0$ in order to construct
$\gbar$.  If $\g$ were bargmannian, and hence the double extension of
$\g_0$ by a skewsymmetric derivation, both of these derivations would
coincide.  In the general leibnizian case, however, they need not
coincide and this gives us a hint how to prove that the class of
bargmannian Lie algebras is a proper subclass of the leibnizian Lie
algebras.

We will do this by constructing a leibnizian Lie algebra which is not 
bargmannian.  Let us take $\g_0$ to be abelian with a euclidean inner
product $\left<-,-\right>_0$.  Let $D_0 \in \so(\g_0)$ and define a
Lie algebra $\g_{\text{car}}$ with underlying vector space $\g_0
\oplus \RR Z$ and Lie brackets
\begin{equation}
  [X, Y] = \left<D_0 X, Y\right>_0 Z \qquad\text{and $Z$ central,}
\end{equation}
for all $X,Y \in \g_0$.  Pick $D \in \so(\g_0)$ in a different
$\SO(\g_0)$-orbit than $D_0$.  Let $\g$ be the Lie algebra with
underlying vector space $\g_0 \oplus \RR Z \oplus \RR W$ with Lie
brackets
\begin{equation}
  \begin{split}
    [X,Y] &=  \left<D_0 X, Y\right>_0 Z\\
    [W,X] &= DX\\
    [Z,-] &= 0,
  \end{split}
\end{equation}
for all $X,Y \in \g_0$.  We will show one example of this construction
for which $\g$ is not bargmannian.  Let $\g_0$ be four-dimensional,
with orthonormal basis $e_i$, $i=1,\dots,4$ and let $D_0,D \in \so(\g_0)$
be given relative to this basis by the matrices
\begin{equation}
  D_0 = \begin{pmatrix}
    \zero & 1 & \zero & \zero \\
    -1 & \zero & \zero & \zero \\
    \zero & \zero & \zero & \alpha\\
    \zero & \zero & -\alpha & \zero
  \end{pmatrix}
  \qquad\text{and}\qquad
  D = \begin{pmatrix}
    \zero & \beta & \zero & \zero \\
    -\beta & \zero & \zero & \zero \\
    \zero & \zero & \zero & \gamma\\
    \zero & \zero & -\gamma & \zero
  \end{pmatrix}.
\end{equation}
Let $\g$ be the six-dimensional Lie algebra spanned by
$e_1,\dots,e_4,e_+,e_-$ with nonzero brackets
\begin{equation}
  [e_i, e_j] = (D_0)_{ij} e_+ \qquad\text{and}\qquad [e_i, e_-] =
  D_{ij} e_j,
\end{equation}
or explicitly,
\begin{equation}
  \begin{aligned}\relax
    [e_1,e_2] &= e_+ \\
    [e_1,e_-] &= \beta e_2\\
    [e_2,e_-] &= -\beta e_1\\
  \end{aligned}
  \qquad\text{and}\qquad
  \begin{aligned}\relax
    [e_3,e_4] &= \alpha e_+\\
    [e_3,e_-] &= \gamma e_4\\
    [e_4,e_-] &= -\gamma e_3\\
  \end{aligned}
\end{equation}
Let $\theta^1,\dots,\theta^4,\theta^+, \theta^-$ denote the canonical
dual basis for $\g^*$.  Then the Lie algebra $\g$ is leibnizian with
$Z = e_+$, $\psi = \theta^-$ and $h = \sum_{i=1}^4 (\theta^i)^2$.  A
short calculation shows that the most general ad-invariant symmetric
bilinear form on $\g$ is given up to scale by
\begin{equation}
  \begin{pmatrix}
    \gamma & \zero & \zero & \zero & \zero & \zero\\
    \zero & \gamma & \zero & \zero & \zero & \zero\\    
    \zero & \zero & \alpha\beta & \zero & \zero & \zero\\
    \zero & \zero & \zero & \alpha\beta & \zero & \zero\\ 
    \zero & \zero & \zero & \zero & \lambda & -\beta\gamma\\
    \zero & \zero & \zero & \zero & -\beta\gamma & \zero\\
  \end{pmatrix}
\end{equation}
for any $\lambda \in \RR$.  This matrix is non-degenerate if and
only if $\alpha \beta \gamma \neq 0$.  So we can take either of
$\alpha$, $\beta$ or $\gamma$ to be zero (with, say, the other two
nonzero) and arrive at a leibnizian Lie algebra which is not
bargmannian.

\section{Conclusion}
\label{sec:conclusion}

In this paper we have characterised Lie algebras admitting
ad-invariant bargmannian, carrollian, and galilean structures.  We
have shown that they are all given by the same data: namely a metric
Lie algebra and a skew-symmetric derivation.  This sets up a canonical
correspondence between carrollian and galilean Lie algebras.

We have shown that these Lie algebras, together with the underlying
metric Lie algebra, fit into a commutative diagram
\eqref{eq:comm-diag-three} of Lie algebras which is isomorphic to a
similar diagram \eqref{eq:comm-diag} involving the Bargmann, Carroll
and Galilei Lie algebras (despite them not being bargmannian,
carrollian nor galilean) and also a similar diagram
\cite[App.~B]{Figueroa-OFarrill:2022tlf} involving some
infinite-dimensional Lie algebras playing a rôle in the hamiltonian
description of particle dynamics on the homogeneous galilean
spacetimes, whose transitive Lie algebras together with their Bargmann
extension and the Carroll Lie algebra itself also fit in yet another
commutative diagram \eqref{eq:comm-diag-too} isomorphic to the other
commutative diagrams.  This diagrammatic coincidence, if indeed it is
a coincidence, deserves to be further studied.

For the strict carrollian and galilean cases, the underlying metric
Lie algebra is positive-definite and this allows a classification up
to isometric isomorphism and the Lie algebras are orthogonal direct
sums of a compact Lie algebra with a choice of positive-definite inner
product and either a Nappi--Witten Lie algebra (in the bargmannian
case), a Heisenberg Lie algebra (in the carrollian case) or an
extension by skew-symmetric derivation of an abelian metric Lie
algebra (in the galilean case).

We also characterised Lie algebras admitting an ad-invariant
leibnizian structure and we saw that they too mediate a Lie algebraic
Carroll/Galilei correspondence which is however not canonical.  Indeed
there is some choice in defining the galilean dual of a carrollian Lie
algebra.  This is the phenomenon already observed at the level of the
spatially isotropic homogeneous galilean spacetimes.  Indeed, as we
could see already in diagram \eqref{eq:comm-diag-too}, the Carroll
spacetime (as the unique kinematical spacetime associated to the
Carroll algebra $\c$) is apparently dual to any one of the galilean
spacetimes simply because $\c$ is an ideal of any of the
$\b_{\alpha,\beta}$ which are extensions by (different) derivations of
$\c$.  Here too, given a carrollian Lie algebra $\g_{\text{car}}$ we
arrive at a metric Lie algebra $\g_0$ with a skew-symmetric derivation
$D_0$.  We may choose to extend $\g_0$ by this very derivation in
order to obtain the galilean dual Lie algebra $\g_{\text{gal}}$ in
diagram \eqref{eq:comm-diag-three} (with $\ghat$ bargmannian) or we
could choose any other skew-symmetric derivation and arrive in this
way at a leibnizian Lie algebra $\g$.  In a way, the bargmannian case
is canonical, since it is uniquely determined by the carrollian Lie
algebra, but the leibnizian case provides added flexibility in the
construction.

\section*{Acknowledgments}

I first reported some of these results at the 2nd Carroll Workshop in
Mons on 12 September 2022.  It is a pleasure to thank Nicolas
Boulanger, Andrea Campoleoni and Yannick Herfray for the invitation to
participate in the meeting.  It is also a pleasure to thank Kevin
Morand for a useful conversation during the conference and for some
very insightful correspondence afterwards; Quim Gomis for
interesting conversations about Carroll/Galilei duality; Stefan
Prohazka for comments and Dieter Van den Bleeken for a careful
reading of a previous version of this note; and Luminița Maxim for
the Romanian lesson.  Last, but by far not least, I would like to
dedicate this paper to the memory of Veronica Stanciu (1925-2022). May
she finally rest in peace.

\section*{Data availability}

Data sharing not applicable to this article as no datasets were
generated or analysed during the current study.

\appendix

\begin{appendices}

\section{Skew-symmetric derivations of a reductive metric Lie algebra}
\label{sec:skew-symm-deriv}

In this appendix we give a proof of
Proposition~\ref{prop:skew-sym-der}, which we rephrase slightly.

Let $\g_0 = \ses \oplus \a$ be an orthogonal direct sum of a compact
semisimple Lie algebra $\ses$ with a choice of a positive-definite
inner product (i.e., a negative multiple of the Killing form on each
simple summand) and an abelian Lie algebra $\a$ with a choice of
euclidean inner product.

\begin{proposition}
  Let $D$ be a skew-symmetric derivation of $\g_0$.  Then $D = D_1 +
  D_2$ where $D_1 = \ad_X$ for some $X \in \ses$ and $D_2 \in
  \so(\a)$.
\end{proposition}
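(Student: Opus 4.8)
The plan is to exploit the fact that the two summands of $\g_0 = \ses \oplus \a$ are intrinsically characterised by the Lie algebra structure, forcing any derivation to respect the decomposition. First I would identify $\ses$ with the derived ideal and $\a$ with the centre: since $\ses$ is semisimple it is perfect, so $[\ses,\ses] = \ses$, while $\a$ is abelian and, being an orthogonal ideal, commutes with $\ses$; hence $[\g_0,\g_0] = \ses$ and $\z(\g_0) = \a$. Because every derivation preserves both the derived ideal and the centre, we get $D(\ses) \subseteq \ses$ and $D(\a) \subseteq \a$, so $D$ splits as $D = D|_{\ses} \oplus D|_{\a}$ and each block can be treated separately. (Alternatively, $D(\a) \subseteq \a$ follows from skew-symmetry, since $\a = \ses^\perp$ and $D$ preserves $\ses$.)

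The abelian block is immediate: as $\a$ is abelian, every endomorphism of $\a$ is a derivation, so the only constraint on $D|_\a$ is skew-symmetry with respect to the euclidean inner product, i.e.\ $D|_\a \in \so(\a)$. I would set $D_2 := D|_\a$, extended by zero on $\ses$.

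The semisimple block is the crux. Here $D|_\ses$ is a derivation of the semisimple Lie algebra $\ses$, and the key input is Whitehead's first lemma: every derivation of a semisimple Lie algebra over a field of characteristic zero is inner. This produces a unique $X \in \ses$ with $D|_\ses = \ad_X$; extending by zero on the central $\a$, this coincides with $\ad_X$ computed in $\g_0$, so I set $D_1 := \ad_X$ and obtain $D = D_1 + D_2$. No separate check of skew-symmetry of $D_1$ is needed, since ad-invariance of $\left<-,-\right>$ is precisely the statement that each $\ad_X$ is skew-symmetric. The only genuine obstacle is the appeal to Whitehead's lemma; if a self-contained argument were wanted in place of a citation, I would run the standard proof showing that $\ad(\ses)$ is a nondegenerate ideal of $\operatorname{Der}(\ses)$ whose orthogonal complement under the Killing form is a complementary ideal $\mathfrak{i}$ satisfying $[\mathfrak{i},\ad_X] = \ad_{\mathfrak{i}X} = 0$ for all $X$, hence acting trivially and thus vanishing — equivalently $H^1(\ses,\ses)=0$ by complete reducibility. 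Everything else is routine.
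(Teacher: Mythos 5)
Your proof is correct, and it reaches the splitting $D = D|_{\ses} \oplus D|_{\a}$ by a different route than the paper. The paper establishes that $D$ preserves the decomposition by a metric computation: starting from $0 = D[X_1,X_2] = [DX_1,X_2] + [X_1,DX_2]$ for $X_1 \in \ses$, $X_2 \in \a$, it pairs with $Y_1 \in \ses$ and uses invariance of the inner product together with $[\ses,\ses]=\ses$ to conclude $DX_2 \perp \ses$, hence $D(\a)\subseteq\a$, and then invokes skew-symmetry and orthogonality to get $D(\ses)\subseteq\ses$. You instead characterise the summands intrinsically, $\ses = [\g_0,\g_0]$ and $\a = \z(\g_0)$, and use the standard facts that any derivation preserves the derived ideal and the centre; this is cleaner, needs neither the inner product nor skew-symmetry for the splitting step, and would apply verbatim to any reductive $\g_0$ with semisimple derived ideal regardless of the metric. (Your parenthetical alternative for $D(\a)\subseteq\a$ via $\a=\ses^\perp$ is essentially the paper's mechanism run in the other direction.) From that point on the two arguments coincide: Whitehead's lemma (equivalently, all derivations of a semisimple Lie algebra are inner) handles $D|_{\ses}$, the abelian block is an arbitrary element of $\so(\a)$, and skew-symmetry of $\ad_X$ is automatic from ad-invariance of the inner product — the paper phrases this last point via the Killing form on each simple factor, but your formulation is the same fact. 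Your remark that $\ad_X$ extended by zero on the central $\a$ agrees with $\ad_X$ computed in $\g_0$ ties up a detail the paper leaves implicit.
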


\begin{proof}
  Let $X_1 \in \ses$ and $X_2 \in \a$.  Then since they commute and
  $D$ is a derivation
  \begin{equation*}
    0 = D[X_1,X_2] = [DX_1, X_2] + [X_1, DX_2].
  \end{equation*}
  Let $Y_1 \in \ses$ and calculate the inner product
  \begin{align*}
    0 &= \left<D[X_1,X_2],Y_1\right> \\
      &= \left<[DX_1,X_2] + [X_1, DX_2], Y_1\right>\\
      &= \left<DX_1, [X_2, Y_1]\right> - \left<DX_2, [X_1, Y_1]\right>.
  \end{align*}
  The first term in the RHS is zero because $[X_2, Y_1] = 0$ and hence
  the second term says that $DX_2$ is perpendicular to $[\ses,\ses] =
  \ses$.  In other words, $DX_2 \in \a$ and hence $D$ preserves $\a$.
  But then, since $\ses$ and $\a$ are orthogonal and $D$ is
  skew-symmetric,
  \begin{equation*}
    0 = \left<DX_1, X_2\right> + \left<X_1, DX_2\right>.
  \end{equation*}
  where the second term vanishes since $DX_2 \in \a = \ses^\perp$.
  Therefore $DX_1$ is perpendicular to $\a$ and hence $D$ also
  preserves $\ses$.  In other words, $D = D_1 + D_2$, where $D_1$ is a
  skew-symmetric derivation of $\ses$ and $D_2$ is a skew-symmetric
  derivation of $\a$.  Since $\ses$ is semisimple, all derivations are
  inner and since the inner product is built out of the Killing form,
  all inner derivations are also skew-symmetric, so that $D_1 = \ad_X$
  for some $X \in \ses$.  Finally, since $\a$ is abelian, any
  endomorphism is a derivation and hence $D_2 \in \so(\a)$ is any
  skew-symmetric endomorphism.
\end{proof}

Now Proposition~\ref{prop:skew-sym-der} follows because if $\ses =
\s_1 \oplus \cdots \oplus \s_k$, then $X \in \ses$ can be written as
$X = \sum_i X_i$.

\section{Once more, with indices}
\label{sec:once-more-with-indices}

In this appendix we will write down the bargmannian, carrollian and
galilean Lie algebras in terms of a basis.

Let $\g_0$ be a metric Lie algebra with basis $(X_a)$, Lie brackets
$[X_a, X_b] = f_{ab}{}^c X_c$ and scalar product $\left<X_a,
  X_b\right> = \eta_{ab}$.  Invariance of the inner product simply
says that $f_{abc}:= f_{ab}{}^d \eta_{dc}$ is totally skew-symmetric.

Let $D_0 : \g_0 \to \g_0$ be a skew-symmetric derivation.  Relative to
the above basis for $\g_0$, we write $D_0 X_a = X_b \omega^b{}_a$,
where the skew-symmetry condition says that
$\omega_{ab} = \eta_{ac}\omega^c{}_b = - \omega_{ba}$ and the fact
that $D_0$ is a derivation says that
\begin{equation}
  \omega^d{}_c f_{ab}{}^c = f_{cb}{}^d \omega^c{}_a + f_{ac}{}^d \omega^c{}_b.
\end{equation}

The bargmannian Lie algebra $\ghat$ associated to the data $(\g_0,
\left<-,-\right>, D_0)$ is the double-extension, which has basis
$(X_a, X_+, X_-)$ and Lie brackets
\begin{equation}
  [X_a, X_b] = f_{ab}{}^c X_c + \omega_{ab} X_+, \qquad [X_-, X_a] =
  X_b \omega^b{}_a \qquad\text{and}\qquad [X_+,-] = 0,
\end{equation}
and the invariant scalar product can always be brought to a form where
the only nonzero entries are:
\begin{equation}
  \left<X_a,X_b\right> = \eta_{ab} \qquad\text{and}\qquad
  \left<X_+,X_-\right>= 1.
\end{equation}

The associated carrollian Lie algebra $\g_{\text{car}}$ is the ideal
of $\ghat$ spanned by $(X_a, X_+)$ with induced brackets
\begin{equation}
  [X_a, X_b] = f_{ab}{}^c X_c + \omega_{ab} X_+ \qquad\text{and}\qquad
  [X_+, -] = 0.
\end{equation}
The invariant carrollian structure consists of $X_+$ and the invariant
symmetric bilinear form $\left<-,-\right>$ which is degenerate since
$\left<X_+,-\right> = 0$.

The associated galilean Lie algebra $\g_{\text{gal}}$ is the quotient
of $\ghat$ by the ideal spanned by $X_+$.  It is spanned by
$(X_a, X_-)$ and the brackets are
\begin{equation}
  [X_a, X_b] = f_{ab}{}^c X_c \qquad\text{and}\qquad [X_-,X_a] = X_b \omega^b{}_a.
\end{equation}
Let $(\theta^a, \theta^-)$ be a canonical dual basis for
$\g_{\text{gal}}^*$.  Then the invariant galilean structure is given
by $\theta^-$ and $\gamma = \eta^{ab}X_a X_b$, where $\eta^{ab}$ are
the entries of the inverse of the restriction of $\eta$ to $\g_0$.

In the special case where $\eta_{ab}$ is positive-definite we
discussed in Section~\ref{sec:classification}, we showed that these
Lie algebras are the orthogonal direct sums of certain `` primitive''
Lie algebras with a compact Lie algebra with a positive-definite
invariant inner product (i.e., the direct sum of a compact semisimple
Lie algebra with a choice of positive-definite invariant inner product
and an abelian Lie algebra with a euclidean inner product).  These
primitive Lie algebras can be read off from the above expressions by
setting $f_{ab}{}^c = 0$, $\eta_{ab}$ positive-definite and
$\omega_{ab}$ symplectic.

Explicitly, the bargmannian Lie algebra is a Nappi--Witten Lie algebra
with brackets
\begin{equation}
  [X_a, X_b] = \omega_{ab} X_+, \qquad [X_-, X_a] = X_b \omega^b{}_a
  \qquad\text{and}\qquad [X_+,-] = 0,
\end{equation}
with inner product $\left<X_a, X_b\right> = \eta_{ab}$ and
$\left<X_+,X_-\right>=1$.  Of course, we can always choose an
orthonormal basis where $\eta_{ab} = \delta_{ab}$ and then use
orthogonal transformations to bring $\omega_{ab}$ to a normal form
\begin{equation}
  \omega = \lambda_1 \theta^1 \wedge \theta^2 + \lambda_2 \theta^3
  \wedge \theta^4 + \cdots + \lambda_\ell \theta^{2\ell + 1} \wedge
  \theta^{2\ell}
\end{equation}
where $\lambda_1 \geq \lambda_2 \geq \cdots \geq \lambda_\ell \geq 0$,
where the Nappi--Witten Lie algebra has dimension $2\ell + 2$.

The carrollian Lie algebra is a Heisenberg Lie algebra with brackets
\begin{equation}
  [X_a, X_b] = \omega_{ab} X_+ \qquad\text{and}\qquad [X_+,-] = 0,
\end{equation}
and the only nonzero brackets of the galilean Lie algebra are
\begin{equation}
  [X_-, X_a] = X_b \omega^b{}_a.
\end{equation}

\end{appendices}

\providecommand{\href}[2]{#2}\begingroup\raggedright\endgroup

% \bibliographystyle{utphys}
% \bibliography{BCG}
% %\bibliography{Sugra,Geometry,Algebra,AdS3}

\end{document}